\theoremstyle{theorem}
\newtheorem{theorem}{Theorem}
\newtheorem{prop}[theorem]{Proposition}
\newtheorem{lemma}[theorem]{Lemma}
\newtheorem{corollary}[theorem]{Corollary}
\theoremstyle{definition}
\newtheorem{definition}[theorem]{Definition}
\DeclareMathOperator{\Deg}{Deg}
\DeclareMathOperator{\supp}{supp}
\renewcommand\@biblabel[1]{[#1]} \makeatother 
\title{A Model for Birdwatching and other \\ Chronological Sampling Activities}
\author{Jes\'us ~A. De Loera$^1$, Edgar Jaramillo-Rodriguez$^1$, \\ Deborah Oliveros$^2$, and Antonio J. Torres$^2$}
\date{%
    $^1$Department of Mathematics, University of California, Davis\\%
    $^2$ Instituto de Matem\'aticas, Universidad Nacional Aut\'onoma de M\'exico\\[2ex]%
    \today
}
\begin{document}

%\markright{A Model via Random Interval Graphs}
\maketitle

\begin{abstract} In many real life situations one has $m$ types of random events happening in chronological order within a time interval and one wishes to predict various milestones about these events or their subsets. 
An example is birdwatching. Suppose we can observe up to $m$ different types of birds during a season. At any moment a bird of type $i$ is observed with some probability. There are many natural questions a birdwatcher may have: how many observations should one expect to perform before recording all types of birds? Is there a time interval where the researcher is most likely to observe all species? Or, what is the likelihood that several species of birds will be observed at overlapping time intervals?  Our paper answers these questions using a new model based on random interval graphs. This model is a natural follow up to the famous coupon collector's problem.
\end{abstract}

%%%%%%%%%%%%%%%%%%%
%%%%%%%%%%%%%%%%%%%
%%%%----Introduction
%%%%%%%%%%%%%%%%%%%
%%%%%%%%%%%%%%%%%%%

\section{Introduction.}\label{intro}
Suppose you are an avid birdwatcher and you are interested in the migratory patterns of different birds passing through your area this winter. Each day you go out to your backyard and keep an eye on the skies; once you see a bird you make a note of the species, day, and time you observed it. You know from prior knowledge that there are $m$ different species of birds that pass over your home every year and you would love to observe at least one representative of each species. Naturally, you begin to wonder: {\em after $n$ observations, how likely is it that I have seen every type of bird?} If we only care that all $m$ types of birds are observed at least once after $n$ observations, we recognize this situation as an example of the famous \emph{coupon collector's problem} (for a comprehensive review of this problem see \cite{Coupon} and references therein). In this old problem a person is trying to collect $m$ types of objects, the coupons, labeled $1,2,\dots ,m$. The coupons arrive one by one as an ordered sequence $X_1,X_2, \ldots$ of independent identically distributed (i.i.d.) random variables taking values in $[m] = \{1,\ldots, m\}$. 

But a professional birdwatcher is also interested in more nuanced information than the coupon collector. To properly understand interspecies interactions, one not only hopes to observe every bird, but also needs to know which species passed through the area at the same time(s). For example, the birdwatcher might also  ask:

\begin{itemize}
\item \emph{What are the chances that the visits of $k$ types of birds do not overlap at all?}

\item \emph{What are the chances that a pair of birds is present on the same time interval?}

\item \emph{What are the chances of one bird type overlapping in time with $k$ others?}

\item \emph{What are the chances that all the bird types overlap in a time interval?}
\end{itemize}

We note that very similar situations, where scientists collect or sample time-stamped data that comes in $m$ types or 
classes and wish to predict overlaps, appear in applications as diverse as archeology, genetics, job scheduling, and paleontology \cite{GOLUMBIC,Fishburn85,pippenger,paleobook}.
%Mathematically, we need a good model that captures the time-overlap questions above. 
The purpose of this paper is to present a new \emph{random graph model} to answer the four time-overlap questions above. 

Our model is very general, but to avoid unnecessary formalism and technicalities, we present clear answers 
 in some natural special cases that directly generalize the coupon collector problem. For the special cases we analyze, the only tools we use are a combination of elementary probability and combinatorial geometry.

\subsection{Establishing a general random interval graph model.}

In order to answer any of the questions above we need to deal with one key problem: how do we estimate which time(s) each species of bird might be present from a finite number of observations? To do so, we will make some 
modeling choices which we outline below. 

The first modeling choice is that our observations are samples from a stochastic process indexed by a real interval $[0,T]$ and taking values in $[m]$. We recall the definition of a stochastic process for the reader (see {\cite{StochProcess}): Let $I$ be a set and let $(\Omega, \mathcal{F}, P)$ be a probability space. Suppose that for each $\alpha \in I$, there is a random variable $Y_\alpha : \Omega \to S \subset \mathbb{R}$ defined on $(\Omega, \mathcal{F}, P)$. Then the function $Y : I \times \Omega \to S$ defined by $Y(\alpha, \omega) = Y_\alpha(\omega)$ is called a \emph{stochastic process} with \emph{indexing set} $I$ and \emph{state space} $S$, and is written $Y = \{Y_\alpha : \alpha \in I\}$. 
When we conduct an observation at some time $t_0 \in [0,T]$, we are taking a sample of the random variable $Y_{t_0}$.

For each $i\in [m]$, the probabilities that $Y_t=i$ give us a function from $[0,T] \to [0,1]$, which we
call the \emph{rate function} of $Y$ corresponding to $i$; the name is inspired by the language of Poisson point processes where the density of points in an interval is determined by a \emph{rate} parameter (see \cite{Ross_Stoch}). 

\begin{definition}[Rate function]
Let $Y = \{Y_t: t \in[0,T]\}$ be a stochastic process with indexing set $I = [0,T]$ and state space $S = [m]$. 
The \emph{rate function} corresponding to label 
$i\in S$ in this process is the function $f_i : I \to [0,1]$ given by $$f_i(t)=P(Y_t =i)= P(\{\omega: Y(t,\omega)=i\}).$$  
\end{definition}

Figure \ref{fig:2examples} gives two examples of the rate functions of some hypothetical stochastic processes (we will clarify the meaning of stationary and non-stationary later in this section when we discuss a special case of our model). Observe that at a fixed time $t_0$, the values $f_i(t_0)$ sum to 1 and thus determine the probability density function of $Y_{t_0}$. Therefore, the rate functions describe the change of the probability density functions of the variables $Y_t$ with respect to the indexing variable $t$.

Next, note that the set of times where species $i$ might be present is exactly the \emph{support} of the rate function $f_i$. Recall, the support of a function is the subset of its domain for which the function is non-zero, in our case this will be a portion of $[0,T]$. Therefore, \emph{our key problem is to estimate the support of the rate functions from finitely many samples}. 

\begin{figure}[h]
\centering     
\subfigure[Stationary]{\label{fig:stat_timeline}\includegraphics[width=65mm]{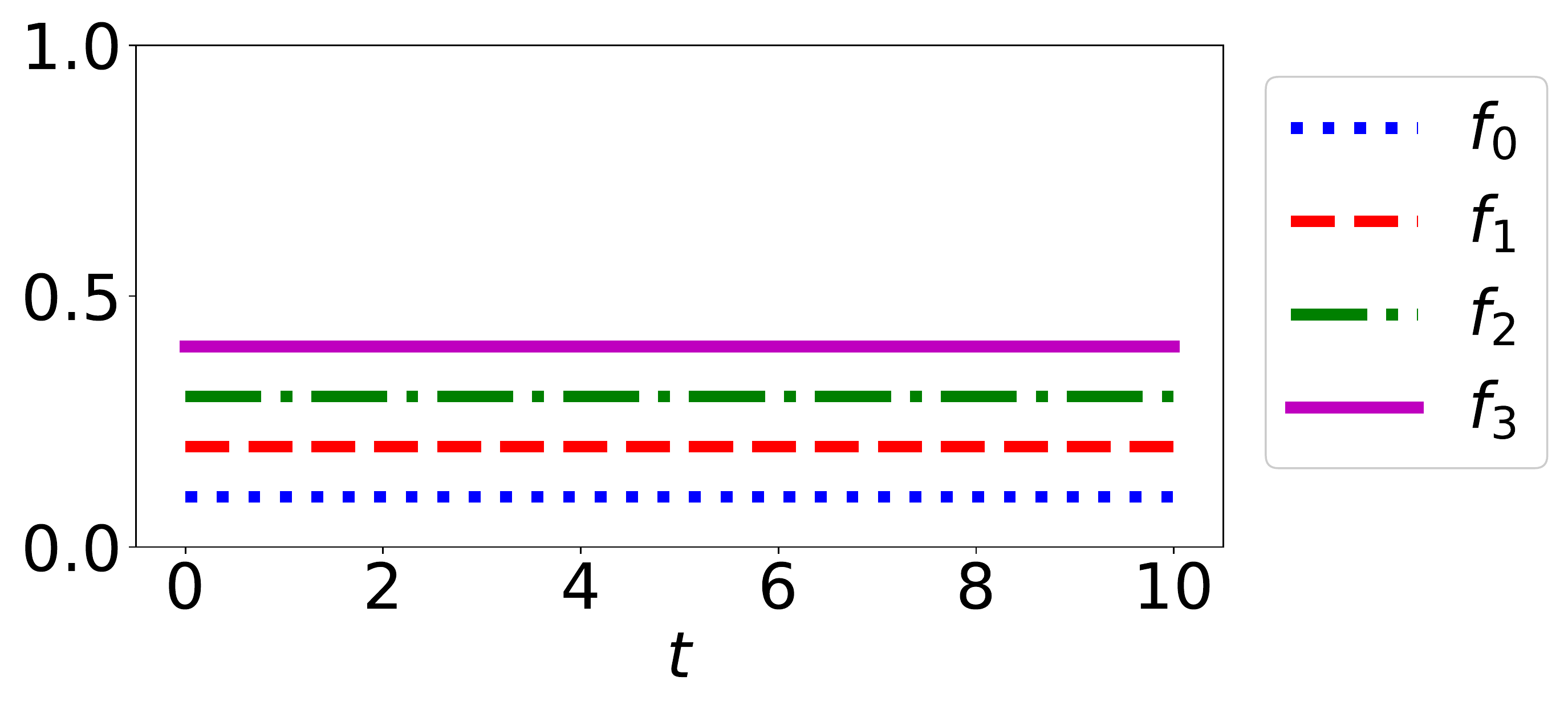}}
\subfigure[Non-Stationary]{\label{fig:timeline}\includegraphics[width=59mm]{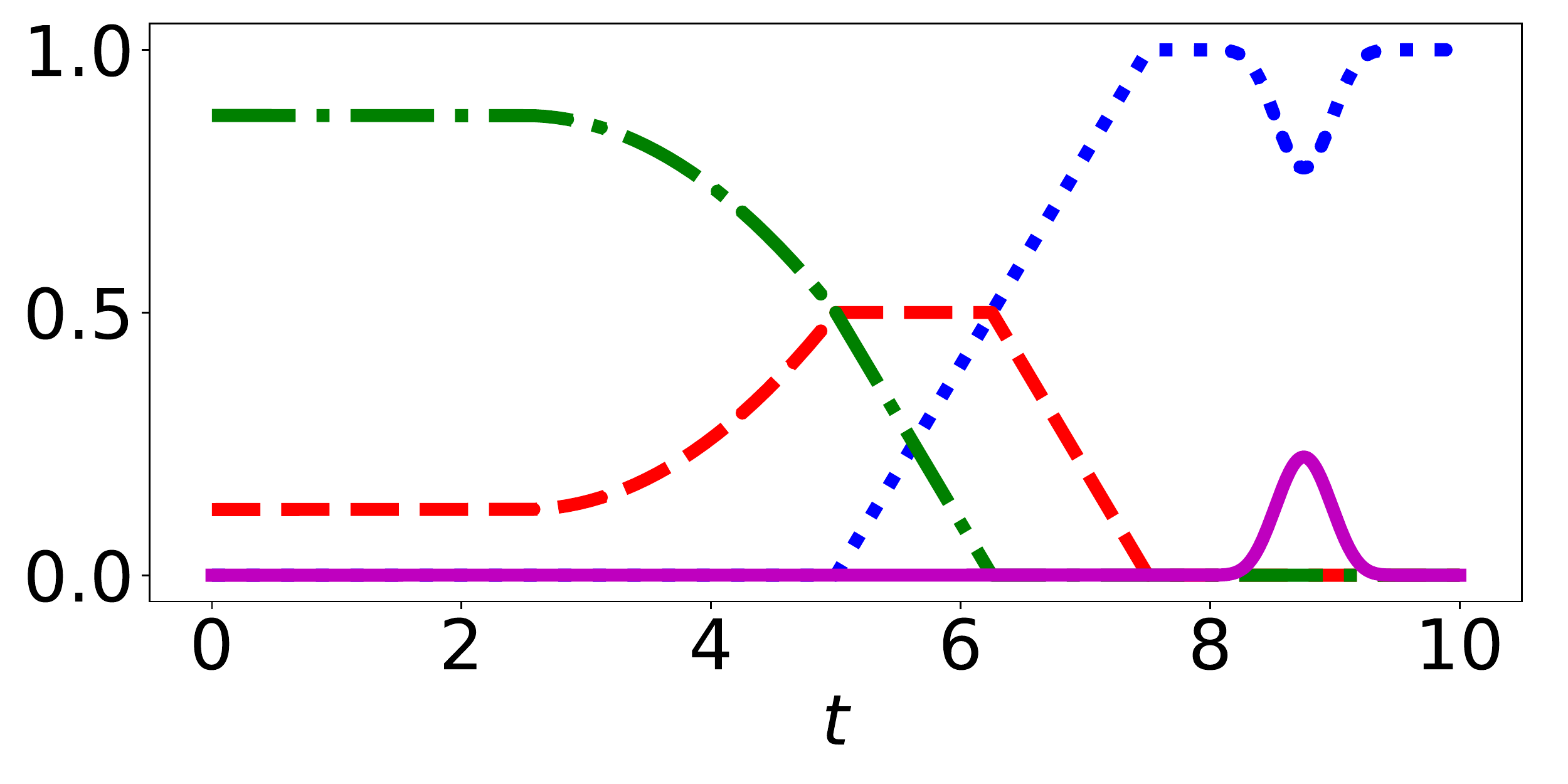}}
\caption{Two examples of hypothetical rate functions.}
{\label{fig:2examples}}
\end{figure}

%\textcolor{red}
We note that the stochastic process $Y$ is defined to take values in $[m]$ due to a modeling choice on our part. Alternatively, one could have $Y$ take values in the power set $2^{[m]}$, so as to allow for multiple species of birds to be observed at the same time. However, choosing $[m]$ rather than $2^{[m]}$ simplifies some calculations and, moreover, is quite reasonable. Rather than registering ``three birds at 6 o'clock," our birdwatcher can instead register three sightings: one bird at 6:00:00, a second at 6:00:01, and a third a 6:00:02, for example. 

This brings us to our next modeling choice: all the rate functions $f_i$ have connected support for each $i \in [m]$. This is reasonable for our motivation; after all, a bird species first seen on a Monday and last seen on a Friday is not likely to suddenly be out of town on Wednesday. The main benefit of this assumption is that now the support of the rate function $f_i$, $\supp(f_i)$, is a sub-interval of $[0,T]$. This fact provides a natural way of approximating the support of $f_i$: given a sequence of observations $Y_{t_1}, Y_{t_2} , \ldots, Y_{t_n}$ with $0 \leq t_1 < t_2 < \ldots < t_n \leq T$, let $I_n(i)$ denote the sub-interval of $[0, T]$ whose endpoints are the first and last times $t_k$ for which $Y_{t_k} = i$. Note that it is possible for $I_n(i)$ to be empty or a singleton. It follows that $I_n(i) \subset \supp(f_i)$ so we can use it to approximate $\supp(f_i)$. We call the interval $I_n(i)$ the \emph{empirical support} of $f_i$, as it is an approximation of $\supp(f_i)$ taken from a random sample. 

In summary, our model is actually quite simple: given a sequence of observations $Y_{t_1}, Y_{t_2} , \ldots, Y_{t_n}$ we construct $m$ random intervals $I_n(1), \ldots, I_n(m)$ whose endpoints are the first and last times we see its corresponding species. These intervals, known as the \emph{empirical supports}, are approximations of the supports of the rate functions, $f_i$, and satisfy $\supp(f_i) \supset I_n(i)$. 

The four birdwatching questions above may be expressed in terms of the empirical supports as follows:

\begin{itemize}
\item \emph{What are the chances that none of the empirical supports $I_n(i)$ intersect?}

\item \emph{What are the chances that a particular pair of empirical supports $I_n(i)$ and $I_n(j)$ intersect?}

\item \emph{What are the chances that one empirical support, $I_n(i)$, intersects with $k$-many others?}

\item \emph{What are the chances that the collection of empirical supports has a non-empty intersection?}
\end{itemize}

To make these questions even easier to analyze, we will present a combinatorial object: an \emph{interval graph} that records the intersections of the intervals $I_n(i)$ in its edge set. 

\begin{definition} Given a finite collection of $m$ intervals on the real line, its corresponding interval graph, $G(V,E)$, is the simple graph with $m$ 
vertices, each associated to an interval, such that an edge $\{i,j\}$ is in $E$ if and only if the associated intervals have a nonempty intersection, i.e., they overlap. 
\end{definition}

Figure \ref{fig:nerve_example} demonstrates how we construct the desired interval graph from some observations. Figure \ref{fig:data} shows a sequence of $n=11$ points on the real line, which corresponds to the indexing set $I$ of our random process $Y$. Above each point we have a label, representing a sample from $Y$ at that time. Displayed above the data are the empirical supports $I_n(i)$ for each $i \in [m] = [4]$. Finally, Figure \ref{fig:int_graph} shows the interval graph constructed from these four intervals where each vertex is labeled with the interval it corresponds to. In this example there are no times shared by the species $\{1,2\}$ and the species $\{4\}$, so there are no edges drawn between those nodes. 

We emphasize that the interval graph constructed in this way will contain up to $m$-many vertices, but may contain fewer if some of the intervals $I_n(i)$ are empty, i.e., if we never see species $i$ in our observations.

\begin{figure}[h]
\centering     
\subfigure[Labeled observations and induced intervals]{\label{fig:data}\includegraphics[width=55mm]{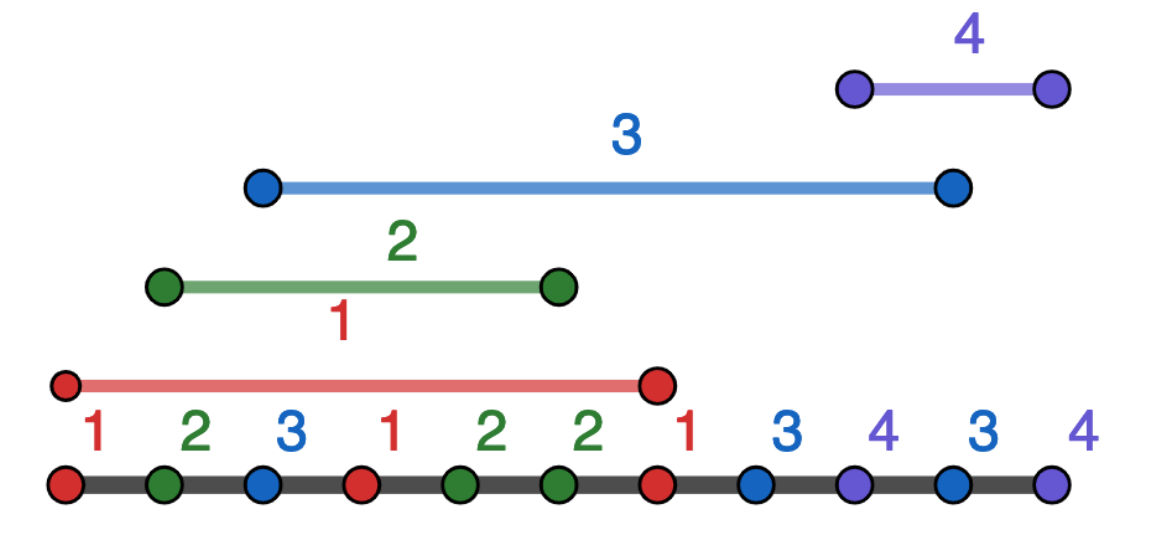}}
\subfigure[Interval Graph]{\label{fig:int_graph}\includegraphics[width=30mm]{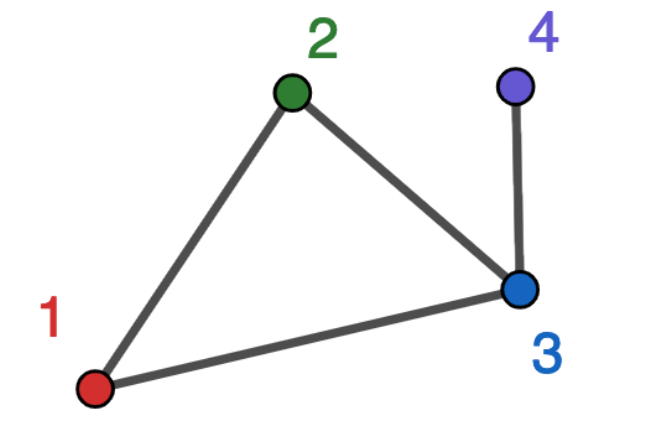}}
\subfigure[Nerve Complex]{\label{fig:nerve}\includegraphics[width=30mm]{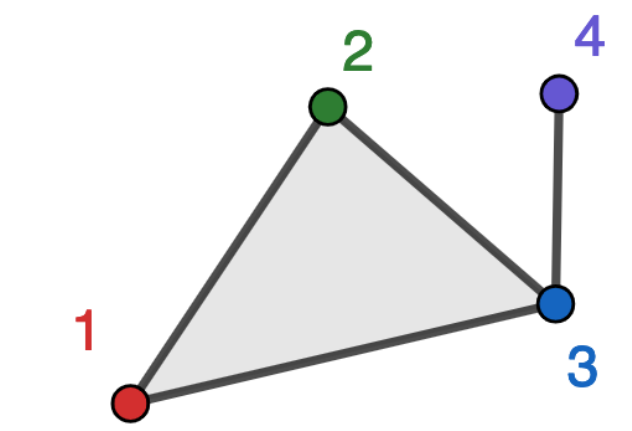}}
\caption{Example observations with their corresponding graph and nerve.}
\label{fig:nerve_example}
\end{figure}

Although the interval graph $G(V,E)$ is constructed using only pairwise intersections, we can further encode all $k$-wise intersections for $k = 2, \ldots, m$ in a \emph{simplicial complex}, which is a way to construct a topological space by gluing \emph{simplices} (generalizations of triangles, tetrahedra, etc). A  simplicial complex $K$ must satisfy the two requirements that every face of a simplex in $K$ is also in $K$ and that the non-empty intersection of any two simplices in $K$ is a face of both.
(for an introduction to basic topology and simplicial complexes see \cite{ghrist2014elementary,Hatcher}). The construction we need is known as the \emph{nerve complex} (see \cite{kozlovbook}, \cite{tancer}, \cite[p.~197]{matousek2002lectures} and \cite[p.~31]{ghrist2014elementary}).

\begin{definition} 
Let $\mathcal{F} = \{F_1,\ldots,F_m\}$ be a family of convex sets in $\mathbb{R}^d$. The \emph{nerve complex}
$\mathcal{N}(\mathcal{F})$ is the abstract simplicial complex whose $k$-facets are the $(k+1)$-subsets $I \subset [m]$ such that $\bigcap_{i\in I} F_i \neq \emptyset$.
\end{definition}

Figure \ref{fig:nerve} shows the nerve complex constructed from the intervals $I_n(i)$ in Figure \ref{fig:data}. Note the presence of a 2-simplex (triangle) with vertices $\{1, 2, 3\}$ because the corresponding intervals mutually intersect.

%\textcolor{red}
By construction, the interval graph $G$ is exactly the 1-skeleton of the nerve complex $\mathcal N$ generated by the intervals. In fact, because our intervals lie in a 1-dimensional space, $\mathcal N$ is completely determined by $G$. To see this, suppose we have a collection of intervals $(x_1,y_1), \ldots, (x_k,y_k)$ such that all intervals intersect pairwise. It follows that $y_i \geq x_j$ for all $i,j \in [k]$, and so $(\max \{x_1, \ldots,x_k\}, \min\{y_1, \ldots, y_k \})$ $\subseteq \cap_{i=1}^k (x_i,y_i)$. Hence the whole collection has non-empty intersection (this is a special case of Helly's theorem \cite{Barvinok}, which is necessary in higher dimensional investigations). Thus, the $k$-dimensional faces of the nerve complex are precisely $k$-cliques of the interval graph. 

Therefore, going forward we will refer to the nerve complex $\mathcal N$ and the graph $G$ interchangeably depending on the context, but the reader should understand that these are fundamentally the same object as long as the family of convex sets $\mathcal F$ lies in a 1-dimensional space. We stress that in higher dimensions the intersection graph of convex sets \emph{does not} determine the nerve complex (we demonstrate this by an example in the Conclusion).

We can now present our random interval graph model in its entirety: 

 \begin{definition}[The Random Interval Graph Model]
We let $Y = \{ Y_t : t\in [0,T]\}$ be a stochastic process as above and let $\mathcal{P}=\{ t_1,t_2,...,t_n\}$ be a set of $n$ distinct observation times or sample points in $[0,T]$ with $t_1 < t_2 < \ldots < t_n$.  Then let $Y = (Y_1, Y_2, \ldots, Y_n)$ be a random vector whose components $Y_i$ are samples from $Y$ where $Y_i = Y_{t_i}$, so each $Y_i$ takes values $\{ 1, \ldots, m\}$. For each label $i$ we define the (possibly empty) interval $I_n(i)$ as the convex hull of the points $t_j$ for which $Y_j =i$, i.e., the interval defined by points colored $i$. Explicitly $I_n(i) = \text{Conv}(\{t_j \in \mathcal{P} : Y_j = i\})$, and we refer to $I_n(i)$ as the \emph{empirical support} of label $i$. 
Furthermore, because it comes from the $n$ observations or samples, we call the nerve complex, $\mathcal N(\{I_n(i): i =1, \ldots m \})$, the \emph{empirical nerve} of $Y$ and denote it $\mathcal N_n(Y)$. 
\end{definition}

Under this random interval graph model our four questions can be rephrased in terms of the random graph $\mathcal N_n(Y)$:

\begin{itemize}
\item \emph{What is the likelihood that $\mathcal N_n(Y)$ has no edges?}

\item \emph{What is the likelihood that a particular edge is present in $\mathcal N_n(Y)$?}

\item \emph{What is the likelihood of having a vertex of degree at least $k$ in $\mathcal N_n(Y)$?}

\item \emph{What is the likelihood that $\mathcal N_n(Y)$ is the complete graph $K_m$?}
\end{itemize}

Our original questions have become questions about random graphs!

\subsection{The special case this paper analyzes.} We presented a very general model because it best captures the nuances and subtleties of our motivating problem. However, without additional assumptions on the distribution $Y$, the prevalence of pathological cases makes answering the motivating questions above become very technical and unintuitive.
Therefore,  our analysis will focus on a special case of this problem where we make two additional assumptions on $Y$ so that our analysis only requires basic combinatorial probability.

The first assumption we make is that our observations $Y_{t_1}, Y_{t_2}, \ldots, Y_{t_n}$ are mutually independent random variables. Note, we do not claim that all pairs of random variables $Y_s, Y_t$ for $s,t \in [0,T]$ are independent. We only claim this holds for all $s,t \in \{t_1, t_2, \ldots, t_n\}$.
The second assumption we make is that the rate functions $f_i$ be constant throughout the interval $[0,T]$. In this case, there exist constants $p_1, p_2, \ldots, p_m \geq 0$ such that $\sum_{i=1}^m p_i = 1$ and $f_i(t) = p_i$ for all $t\in [0,T]$ and all $i \in [m]$. We call the special case of our model where both of these assumptions are satisfied the \emph{stationary case} and all other cases as \emph{non-stationary}. Figure \ref{fig:2examples} shows examples 
of a stationary case, \ref{fig:stat_timeline}, and a non-stationary case, \ref{fig:timeline}. We will also refer to the \emph{uniform case}, which is the extra-special situation where $p_i=\frac{1}{m}$ for all $i\in [m]$. Note Figure \ref{fig:stat_timeline} is stationary but not uniform.

Of course, the stationary case is less realistic and applicable in many situations. For example, it is not unreasonable to suppose that the presence of a dove at 10 o'clock should influence the presence of another at 10:01, or that the presence of doves might fluctuate according to the season and time of day. However, the stationary case is still rich in content and, importantly, simplifies things so that this analysis requires only college-level tools from probability and combinatorics. Moreover, as we discuss below, the stationary case has a strong connection to the famed coupon collector problem and is of interest as a novel method for generating random interval graphs. 

The stationary case assumptions directly lead to two important consequences that greatly simplify our analysis. The first is that now the random variables $Y_{t_1} ,\ldots, Y_{t_n}$ are independent and identically distributed (i.i.d.) such that $P(Y_{t_k} = i) =p_i >0$. Note that this is true for any set of distinct observation times $\mathcal P = \{t_1, \ldots, t_n\}$. The second consequence simplifies things further still: though the points $\mathcal{P}$ corresponding to our sampling times have thus far been treated as arbitrary, one can assume without loss of generality that $\mathcal{P} =[n]= \{1,2,\ldots, n\}$ since all sets of $n$ points in $\mathbb{R}$ are combinatorially equivalent, as explained in the following lemma.

\begin{lemma}
\label{stat_lemma}
Let $\mathcal{P} = \{x_1, \ldots, x_n \}$ and $\mathcal{P}' = \{x_1', \ldots, x_n' \}$ be two sets of $n$ distinct points in $\mathbb{R}$ with $x_1 < \ldots < x_n$ and $x_1' < \ldots < x_n'$. Let $Y = (Y_1, \ldots, Y_n)$ and $Y' = (Y_1', \ldots, Y_n')$ be i.i.d. random vectors whose components are i.i.d. random variables taking values in $[m]$ with $P(Y_j = i) = p_i > 0$ and $P(Y^{\prime}_j = i) = p_i > 0$. Then for any abstract simplicial complex $\mathcal{K}$ we have that $P(\mathcal{N}_n(\mathcal P, Y) = \mathcal{K}) = P(\mathcal{N}_n(\mathcal P', Y') = \mathcal{K})$.
\end{lemma}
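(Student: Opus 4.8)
The plan is to reduce the claim to the elementary fact that the empirical nerve depends on the sample points only through their \emph{order type} — which, as noted just before the lemma, is the same for any two sets of $n$ distinct reals. First I would fix a label vector $y = (y_1,\dots,y_n) \in [m]^n$ and analyze $\mathcal{N}_n(\mathcal{P},y)$ deterministically. For each label $i$ that occurs in $y$, let $S_i = \{\, j : y_j = i\,\}$, $a_i = \min S_i$, $b_i = \max S_i$. Since $x_1 < \dots < x_n$, the empirical support of $i$ is the closed interval $I_n(i) = \conv\{x_j : j \in S_i\} = [x_{a_i}, x_{b_i}]$ (a single point when $a_i = b_i$), and two such intervals satisfy $I_n(i) \cap I_n(i') \neq \emptyset$ if and only if $x_{a_i} \le x_{b_{i'}}$ and $x_{a_{i'}} \le x_{b_i}$, which — because the $x_j$ are increasing — is equivalent to the purely index-theoretic condition $a_i \le b_{i'}$ and $a_{i'} \le b_i$. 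This condition refers to $y$ only, not to the actual values $x_j$. Running the identical computation with $\mathcal{P}'$ in place of $\mathcal{P}$ produces exactly the same criterion, so the interval graph of $\{I_n(i)\}$ under the label vector $y$ is the same in both cases; and since (as established above) in dimension one the nerve complex is completely determined by its $1$-skeleton, we get $\mathcal{N}_n(\mathcal{P},y) = \mathcal{N}_n(\mathcal{P}',y)$ as abstract simplicial complexes on the vertex set $[m]$.

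A cleaner way to package the same observation is through the order-preserving bijection $\phi\colon \mathcal{P} \to \mathcal{P}'$ given by $\phi(x_j) = x_j'$: being monotone, $\phi$ preserves the betweenness relation among points of $\mathcal{P}$, hence carries each $I_n(i) = \conv\{x_j : y_j = i\}$ onto $\conv\{x_j' : y_j = i\}$ while preserving every intersection and non-intersection, and therefore induces an isomorphism of nerve complexes fixing each vertex label. Either route gives the deterministic equality $\mathcal{N}_n(\mathcal{P},y) = \mathcal{N}_n(\mathcal{P}',y)$ for every fixed $y$.

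With that in hand, the probabilistic statement follows by conditioning on the realized label vector. For any abstract simplicial complex $\mathcal{K}$,
\[
P(\mathcal{N}_n(\mathcal{P}, Y) = \mathcal{K}) = \sum_{y \,:\, \mathcal{N}_n(\mathcal{P}, y) = \mathcal{K}} P(Y = y) = \sum_{y \,:\, \mathcal{N}_n(\mathcal{P}', y) = \mathcal{K}} P(Y' = y) = P(\mathcal{N}_n(\mathcal{P}', Y') = \mathcal{K}),
\]
where the middle equality uses both that the two index sets of summation coincide (by the deterministic claim above) and that $P(Y = y) = \prod_{j} p_{y_j} = P(Y' = y)$ since each vector has i.i.d.\ coordinates with the same marginal. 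The only point demanding a little care is the bookkeeping at degenerate configurations — a label absent from $y$ contributes no vertex (so $\mathcal{K}$ may live on a proper subset of $[m]$), and a label occurring once contributes a single point — but in all these cases the index criterion $a_i \le b_{i'}$, $a_{i'} \le b_i$ still makes sense and still depends only on $y$, so no new difficulty arises. I do not expect a substantive obstacle here; the content of the lemma is precisely that the stationary model is invariant under monotone relabelling of the time axis.
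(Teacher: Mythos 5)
Your proposal is correct and follows essentially the same route as the paper's own proof: condition on the realized label vector, observe that the pairwise intersection pattern of the empirical supports depends only on the labels (not on the actual point locations), invoke the Helly/1-skeleton fact to upgrade this to equality of nerves, and sum using $P(Y=y)=P(Y'=y)$. Your version is simply more explicit about why the intersection criterion is purely index-theoretic, which the paper states without detail.
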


\begin{proof}
Let $c_1,c_2,\ldots, c_n$ be an arbitrary sequence of labels, so $c_i \in [m]$ for each $i$. Because $Y,Y'$ are i.i.d. we have that  $P(\cap_{i=1}^n \{Y_i =c_i)\}) = P(\cap_{i=1}^n (\{Y_i' =c_i\}).$
Therefore if both sequences of colors $Y_i = Y_i' = c_i$ have the same order for all $i =1,\ldots, n$, then it is sufficient to show that the two empirical nerves are the same. Consider two empirical supports $I_n(j)$ and $I_n(k)$ of labels $j,k$, and observe that if they do (do not) intersect on $Y_i$, then the two empirical supports $I^{\prime}_n(j)$ and $I^{\prime}_n(k)$ of labels $j,k$ do (do not) intersect, then the two corresponding empirical nerves do (do not) contain the edge $\{j,k\}$. This implies that the two nerves have the same edge set. Furthermore, as we observed before, due to Helly's theorem in the line the empirical nerve is completely determined by its 1-skeleton. Then both empirical nerves are the same.
\end{proof}

We now summarize the key assumptions of our model in the stationary case. 

{\bf Key assumptions for our analysis:} \emph{ In all results that follow
let $Y = (Y_1, \ldots, Y_n)$ be a random vector whose components are i.i.d. random variables such that $P(Y_j = i) = p_i >0$ for all $i \in [m]$. As a consequence the support functions of the underlying stochastic process are constant and each has support on the entire domain.
We denote by $\mathcal{N}_n = \mathcal{N}_n([n], Y)$ the empirical nerve of the random coloring induced by $Y$. We also denote the graph or 1-skeleton of  $\mathcal{N}_n$ by the same symbol. When we refer to the uniform case this means the special situation when $p_i=\frac{1}{m}$ for all $i=1,\dots, m$.}
 
\subsection{Context and prior work.}

We want to make a few comments to put our work in context and mention prior work:

The famous coupon collector problem that inspired us dates back to 1708 when it first appeared in De Moivre's \textit{De Mensura Sortis (On the Measurement of Chance)} \cite{Coupon}. The answer for the coupon collector problem depends on the assumptions we make about the distributions of the $X_i$. Euler and Laplace proved several results when the coupons are equally likely, that is when $P(X_i = k) = \frac{1}{m}$ for every $k\in [m]$. The problem lay dormant until 1954 when H. Von Schelling obtained the expected waiting time when the coupons are not equally likely \cite{Schelling}. More recently, Flajolet et. al. introduced a unified framework relating the coupon collector problem to many other random allocation processes \cite{FLAJOLET}. We note that the stationary case of our model has the same assumptions as this famous problem: an observer receives a sequence of i.i.d. random variables taking values in $[m]$. In the language of our model, the coupon collector problem could be posed as, \emph{What is the likelihood that the nerve} $\mathcal{N}_n(Y)$ \emph{will contain exactly m vertices?} Thus, we can consider this model a generalization of the coupon collector problem which seeks to answer more nuanced questions about the arrival of different coupons.

Interval graphs have been studied extensively due to their wide applicability  in areas as diverse as archeology, 
genetics, job scheduling, and paleontology \cite{GOLUMBIC,Fishburn85,pippenger,paleobook}.  These graphs 
have the power to model the overlap of spacial or chronological events and allow for some inference of structure. 
There are also a number of nice characterizations of interval graphs that have  been obtained \cite{Lekkeikerker,fulkersongross,gilmore_hoffman,hanlon82}.
For example, a graph $G$ is an interval graph if and only if the maximal cliques of $G$ can be linearly ordered in such a 
way that, for every vertex $x$ of $G$, the maximal cliques containing $x$ occur consecutively in the list. Another remarkable fact of interval graphs is that they are \emph{perfect} and thus the weighted clique and coloring problems are polynomial time solvable \cite{GOLUMBIC}.  Nevertheless, sometimes it may not be immediately clear whether a graph is an interval graph or not. For example, of the graphs in Figure \ref{fig:graph_example} only \ref{fig:graph1} is an interval graph. 

\begin{figure}[h]
\centering     %%% not \center
\subfigure[]{\label{fig:graph1}\includegraphics[width=42mm]{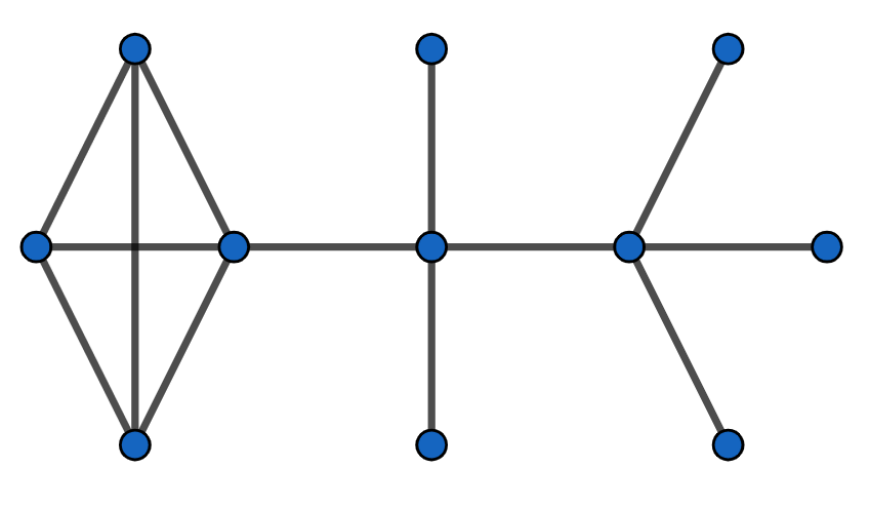}}
\subfigure[]{\label{fig:graph2}\includegraphics[width=25mm]{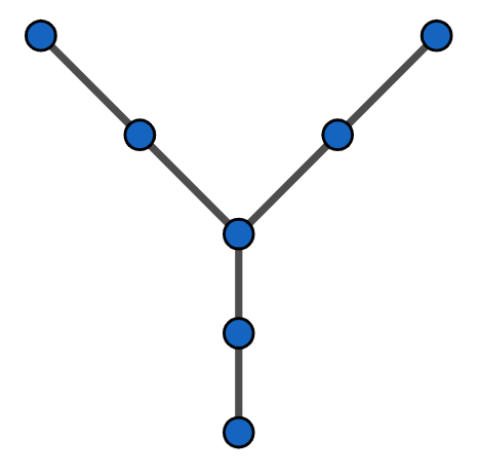}}
\subfigure[]{\label{fig:graph3}\includegraphics[width=25mm]{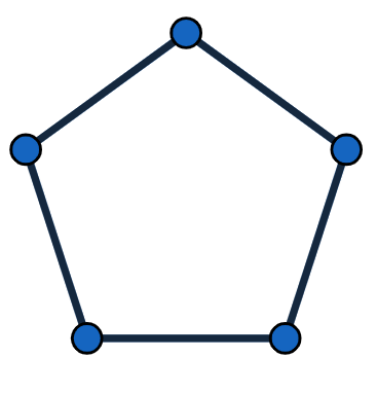}}
\caption{It is not obvious which of these graphs are interval.}
\label{fig:graph_example}
\end{figure}

The most popular model for generating random graphs is the Erd\H{os}-Renyi model \cite{erdos-renyi}, but it is insufficient for studying random interval graphs. The reason is that, as pointed out in \cite{cohenetal1979probability}, an Erd\H{os}-Renyi graph is almost certainly \emph{not} an interval graph as the number of vertices goes to infinity.

Several other authors have studied various models for generating random \emph{interval graphs} (see \cite{diaconis2013interval, Scheinermanoriginal, Scheinerman2, JusticzScheinermanWinkler, iliopoulos, pippenger} and the many references therein). Perhaps most famously Scheinerman introduced \cite{Scheinermanoriginal,Scheinerman2}, and others 
investigated \cite{diaconis2013interval,JusticzScheinermanWinkler,iliopoulos}, a method of generating random 
interval graphs with a fixed number of intervals $m$:  the extremes of the intervals  $\{(x_1, y_1),\dots, (x_m, y_m)\}$ 
are $2m$ points chosen independently from some fixed continuous probability distribution on the real line. Each pair 
$(x_i, y_i)$ determines a random interval. This is a very natural simple random process, but it is different from our 
random process (see the Appendix).

%\textcolor{red}
%There are, of course, many models for generating random graphs, starting from the famous Erd\H{os}-Renyi model \cite{erdos-renyi}. However, Cohen, Koml\'os, and Mueller \cite{cohenetal1979probability} showed that the probability that an Erd\H{o}s-Renyi random graph with $n$ vertices forms an interval graph goes to zero as $n$ goes to infinity. 

%There are many models for generating random graphs. The most famous is the Erd\H{os}-Renyi model \cite{erdos-renyi}, but it is insufficient to study random interval graphs. The reason is that Cohen, Koml\'os, and Mueller \cite{cohenetal1979probability} showed that the probability that an Erd\H{os}-Renyi graph is an interval graph goes to 0 as the number of vertices goes to infinity.

%There are many models for generating random graphs. The most famous is the Erd\H{os}-Renyi model \cite{erdos-renyi}. However,  Cohen, Koml\'os, and Mueller \cite{cohenetal1979probability} showed that an Erd\H{os}-Renyi graph is almost certainly \emph{not} an interval graph as the number of vertices goes to infinity.

%The most famous model for generating random graphs is the Erd\H{os}-Renyi model \cite{erdos-renyi}. However, an Erd\H{os}-Renyi graph is almost certainly \emph{not} an interval graph as the number of vertices goes to infinity \cite{cohenetal1979probability}. Therefore, the Erd\H{os}-Renyi model is insufficient for studying random interval graphs.

We noted earlier that because our intervals lie in a 1-dimensional space, the nerve complex  is completely determined by the interval graph because the $k$-facets of the nerve complex are exactly the $k$-cliques of the interval graph. In other words, the nerve complex is precisely the \emph{clique complex} of the interval graph. We also remark that the complement graph of the interval graph $G$ is the graph $H$ of non-overlapping intervals. The graph $H$ is in fact a partially ordered set, called the \emph{interval order} where one interval is less than the other if the first one is completely to the left of the second one. We can associate to each \emph{independent set} of $k$ non-intersecting intervals, a $(k-1)$-dimensional simplex, this yields a simplicial complex, the \emph{independence complex} of the corresponding interval order graph $H$. Observe that this independence complex is the same as the nerve $\mathcal N$  we just defined above. This is all well-known since the independence complex  of any graph equals the clique complex of its complement graph, and vice versa (see Chapter 9 in \cite{kozlovbook}).

\subsection{Outline of our contributions.}

In this paper we answer the four birdwatching questions using the random interval graphs and complexes generated by the stochastic process described above. Here are our results section by section:

Section \ref{sec:expectation} presents various results about the expected structure of the random interval graph $\mathcal{N}_n$, including the expected number of edges and the likelihood that the graph has an empty edge set. 

Section \ref{sec:cliques} presents results regarding the distribution of maximum degree and clique number of the graph $\mathcal{N}_n$, and our results show that the random interval graph asymptotically approximates the complete graph, $K_m$, as the number of samples $n$ grows large. This means the nerve complex is asymptotically an $(m-1)$-dimensional simplex. From the results of Section \ref{sec:cliques} one can see that as we sample more and more bird observations it becomes increasingly unlikely that we see any graph other than $K_m$. We investigate the number of samples needed to find $K_m$ with high probability.

Section \ref{conclusiones} closes the paper outlining three natural open questions. We also include an Appendix that contains computer experiments to evaluate the quality of various bounds proved throughout the paper and to show our model is different from earlier models of random interval graphs.

%%%%%%%%%%%%%%%%%%%
%%%%%%%%%%%%%%%%%%%
%%%%----Behaviour in Expectation
%%%%%%%%%%%%%%%%%%%
%%%%%%%%%%%%%%%%%%%
\section{Random Interval Graphs and Behavior in Expectation.} \label{sec:expectation}

In this section we explore what type of nerve complexes one might expect to find for a fixed number of observations $n$ when the likelihood of observing each label $i$ is a constant $p_i>0$. 

\begin{prop}\label{Null_small_prop}  Under the key assumptions in Section \ref{intro}, the probability that the random graph $\mathcal{N}_n$ is the empty graph with $0\leq k \leq m$ vertices but no edges, $K_k^c$, is given by
$$P(\mathcal{N}_n=K_k^c)\geq p_{*}^n  k! \binom{m}{k}\binom{n-1}{k-1},$$ 
where $p_{*}=\min\{p_1,p_2,$ $...,p_m\}$. 
Moreover, if  $p_i = \frac{1}{m}$ for all $i \in [m]$, then
$$P(\mathcal{N}_n=K_k^c)= \frac{k!}{m^n} \binom{m}{k}\binom{n-1}{k-1}.$$ 
\end{prop}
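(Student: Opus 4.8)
The plan is to translate the event $\{\mathcal{N}_n = K_k^c\}$ into a clean statement about the random color sequence $Y = (Y_1,\ldots,Y_n)$, enumerate the sequences that realize it, and then sum their probabilities. First I would characterize the event combinatorially. Saying $\mathcal{N}_n = K_k^c$ means exactly $k$ of the $m$ labels occur in $Y$ and the empirical supports $I_n(i)$ of these $k$ labels are pairwise disjoint (any two intersecting supports would create an edge). The key observation is that every sample point $j \in [n]$ lies in $I_n(Y_j)$, since $I_n(Y_j)$ is the convex hull of all positions carrying the label $Y_j$, which includes $j$. Hence the sets $I_n(i) \cap [n]$ cover $[n]$; if they are also pairwise disjoint they partition $[n]$ into $k$ nonempty blocks of consecutive integers. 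Each such block is forced to be monochromatic: if $j$ lies in the block coming from $I_n(i)$ but $Y_j = i' \neq i$, then $j \in I_n(i')\cap I_n(i)$, contradicting disjointness. Conversely, any coloring splitting $[n]$ into $k$ nonempty consecutive blocks, with each block assigned a single label and distinct labels used on distinct blocks, yields $k$ pairwise disjoint empirical supports and hence $\mathcal{N}_n = K_k^c$. So the event is in bijection with the set of such ``block colorings.''

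Next I would count the block colorings and estimate their probabilities. Splitting $[n]$ into $k$ nonempty consecutive blocks is the same as choosing a composition of $n$ into $k$ positive parts, of which there are $\binom{n-1}{k-1}$; choosing which $k$ of the $m$ labels appear gives $\binom{m}{k}$; and assigning the chosen labels bijectively to the ordered blocks gives $k!$. Thus there are exactly $k!\binom{m}{k}\binom{n-1}{k-1}$ sequences in the event. By the independence assumption, a fixed sequence using label $i$ exactly $n_i$ times has probability $\prod_{i} p_i^{n_i}$ with $\sum_i n_i = n$, which is at least $p_*^{n}$ since each $p_i \geq p_*$, and which equals $1/m^n$ in the uniform case. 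Summing over all sequences in the event gives $P(\mathcal{N}_n = K_k^c) \geq p_*^{n}\, k!\binom{m}{k}\binom{n-1}{k-1}$, and in the uniform case this sum is the exact value $\frac{k!}{m^n}\binom{m}{k}\binom{n-1}{k-1}$.

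The only non-mechanical step is the combinatorial characterization in the first paragraph: one must argue carefully that pairwise disjointness of the empirical supports, combined with the fact that each point lies in its own label's support, forces the monochromatic-consecutive-block structure (so that the event is exactly the set of block colorings, with no extra or missing cases, including degenerate size-one blocks). Once that equivalence is in hand, both the counting and the probability bound are routine, and the uniform statement follows by substituting $p_i = 1/m$.
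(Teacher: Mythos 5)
Your proposal is correct and follows essentially the same route as the paper: characterize the event as a partition of $[n]$ into $k$ monochromatic consecutive blocks, count these via compositions ($\binom{n-1}{k-1}$), label choices ($\binom{m}{k}$), and assignments ($k!$), and bound each sequence's probability below by $p_*^n$ (exactly $1/m^n$ in the uniform case). Your first paragraph actually spells out the disjointness-forces-consecutive-blocks equivalence more carefully than the paper does, which states it without proof.
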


\begin{proof} 
Note that for $\mathcal{N}_n$ to form a disjoint collection of $k$ points, the intervals  induced by the coloring must also be disjoint. This occurs if and only if all points of the same color are grouped together. Given $k$ fixed colors it is well known that the disjoint groupings are counted by the number of compositions of $n$ into $k$ parts, $\binom{n-1}{k-1}$. Each composition occurs with probability at least $p_{*}^n$. Finally, considering the $\binom{m}{k}$ different ways to choose these $k$ colors and the $k!$ ways to order them, we have that, $$P(\mathcal{N}_n=K_k^c)\geq p_{*}^n k! \binom{m}{k} \binom{n-1}{k-1}.$$ 

The last statement  follows the same idea  but here every $k-$coloring of the $n$ points  happens with probability $\frac{1}{m}$. 
\end{proof}

Next we bound the probability that a particular edge is present in the random interval graph.

\begin{theorem}\label{ijedges} Under the key assumptions in Section \ref{intro} and
%Let $Y = (Y_1, \ldots, Y_n)$ be a random vector whose components are i.i.d. random variables such that $P(Y_j = i) = p_i >0$ for all $i \in [m]$. Let $\mathcal{N}_n = \mathcal{N}_n([n], Y)$ denote the empirical nerve of the random coloring induced by $Y$. 
for any pair $\{i,j\}$, $1\leq i < j \leq m$,  the probability of event $A_{ij} =\{\{i,j\} \in \mathcal{N}_n \}$, i.e., that the edge $\{i,j\}$ is present in the graph $\mathcal{N}_n$ equals
%$$ P(A_{ij}) = 1- (1-p_{ij})^n -\sum_{k=1}^n  \binom{n}{k}\bigg[ \bigg( 2 \sum_{r=1}^{k-1} p_i^r p_j^{k-r} \bigg) +p_i^k +p_j^k \bigg] (1-p_{ij})^{n-k},$$
%where $p_{ij} := p_i +p_j$.
$$ P(A_{ij}) = 1-q_{ij}^n -\sum_{k=1}^n  \binom{n}{k}\bigg[ \bigg( 2 \sum_{r=1}^{k-1} p_i^r p_j^{k-r} \bigg) +p_i^k +p_j^k \bigg] q_{ij}^{n-k},$$
where $q_{ij} = 1-(p_i +p_j)$.\\

When $p_i = \frac{1}{m}$ for all $i \in [m]$, then $ P(A_{ij}) = 1- \frac{2n(m-1)^{n-1}+(m-2)^n}{m^n}.$
\end{theorem}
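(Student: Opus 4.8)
The plan is to work with the complementary event $A_{ij}^c$, that the edge $\{i,j\}$ is \emph{absent} from $\mathcal{N}_n$, equivalently that the empirical supports $I_n(i)$ and $I_n(j)$ are disjoint (with the convention that an empty interval is disjoint from everything). The first step is a combinatorial characterization: since $\mathcal{P} = [n]$ and all coordinates are distinct, $I_n(i) = [\min\{j: Y_j = i\}, \max\{j : Y_j = i\}]$, so $I_n(i)$ and $I_n(j)$ are disjoint precisely when, reading the positions colored $i$ or $j$ from left to right, either all the $i$'s precede all the $j$'s or all the $j$'s precede all the $i$'s --- where either color may also be entirely absent.

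Next I would stratify by $k$, the number of points receiving a color in $\{i,j\}$, and by $r$, the number of those receiving color $i$. For fixed $k$ and $r$ one chooses the $k$ special positions in $\binom{n}{k}$ ways; given that choice, the event ``all $i$'s left of all $j$'s'' corresponds to exactly one coloring of those $k$ positions (the leftmost $r$ get $i$, the rest get $j$) and has probability $p_i^r p_j^{k-r} q_{ij}^{n-k}$, and likewise ``all $j$'s left of all $i$'s'' has the same probability. The subtlety to handle carefully here is the boundary: when $1 \le r \le k-1$ these two colorings are genuinely distinct, contributing the factor $2$, but when $r = 0$ or $r = k$ one color is absent and there is only a single coloring. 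Summing, the $k = 0$ stratum contributes $q_{ij}^n$ and, for $k \ge 1$, the stratum contributes $\binom{n}{k} q_{ij}^{n-k}\big[\, 2\sum_{r=1}^{k-1} p_i^r p_j^{k-r} + p_i^k + p_j^k \,\big]$; adding these and passing to the complement gives the claimed formula for $P(A_{ij})$.

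For the uniform specialization $p_i = p_j = 1/m$, $q_{ij} = (m-2)/m$, the bracketed sum collapses: $2\sum_{r=1}^{k-1} m^{-k} + 2 m^{-k} = 2k/m^k$, so $P(A_{ij}^c) = \frac{(m-2)^n}{m^n} + \frac{2}{m^n}\sum_{k=1}^n \binom{n}{k} k\, (m-2)^{n-k}$. I would then finish with the standard identity $\sum_{k=1}^n k \binom{n}{k} x^{n-k} = n\sum_{j=0}^{n-1}\binom{n-1}{j} x^{n-1-j} = n(1+x)^{n-1}$, obtained from $k\binom{n}{k} = n\binom{n-1}{k-1}$ and the binomial theorem, with $x = m-2$, yielding $P(A_{ij}^c) = \frac{2n(m-1)^{n-1} + (m-2)^n}{m^n}$ and hence the stated value of $P(A_{ij})$.

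I expect the only real obstacle to be the careful bookkeeping in the stratification --- in particular making sure the disjointness characterization is exactly the ordering condition (so that, e.g., a single $i$-point sandwiched between two $j$-points correctly yields an edge), and that the factor of $2$ is applied on $1 \le r \le k-1$ but not at the endpoints $r \in \{0,k\}$ nor at $k = 0$; everything after that is routine algebra.
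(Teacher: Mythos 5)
Your proposal is correct and follows essentially the same route as the paper: both pass to the complementary event that $I_n(i)$ and $I_n(j)$ are disjoint, characterize this by the left-to-right separation of the $i$- and $j$-colored points, stratify by the number $k$ of points colored $i$ or $j$ and the split $r$, and finish the uniform case with $k\binom{n}{k}=n\binom{n-1}{k-1}$ and the binomial theorem. The only (cosmetic) difference is that the paper organizes the complement as three named disjoint events (both colors absent, exactly one absent, both present but separated) rather than your single double sum with boundary cases $r\in\{0,k\}$ handled separately.
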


\begin{proof}
We will find the probability of the complement, $A_{ij}^c$, which is the event where the two empirical supports do not intersect, i.e., $A_{ij}^c = \{I_n(i) \cap I_n(j)\} = \emptyset$. Let $C_i = \{\ell : Y_\ell = i, 1 \leq \ell \leq n \}$ and define $C_j$ analogously. Note that $A_{ij}^c$ can be expressed as the disjoint union of three events: 
\begin{enumerate}
	\item $\{C_i \text{ and } C_j \text{ are both empty}\}$,
	\item $\{\text{Exactly one of } C_i \text{ or } C_j \text{ is empty}\}$,
	\item $\{C_i \text{ and } C_j \text{ are both non-empty but $I_n(i)$ and $I_n(j)$ do not intersect}\}$.
\end{enumerate}
The probability of the first event is simply $q_{ij}^n$. For the second event, assume for now that $C_i$ will be the non-empty set and let $k \in [n]$ be the desired size of $C_i$. There are $\binom{n}{k}$ ways of choosing the locations of the $k$ points in $C_i$. Once these points are chosen, the probability that these points receive label $i$ and no others receive label $i$ nor label $j$ is exactly $p_i^kq_{ij}^{n-k}$. Summing over all values of $k$ and noting that the argument where $C_j$ is non-empty is analogous, we get that the probability of the second event is exactly $\sum_{k=1}^n \binom{n}{k}(p_i^k +p_j^k)q_{ij}^{n-k}$.

Now, note that the third event only occurs if all the points in $C_i$ are to the left of all points in $C_j$ or vice versa; for now assume $C_i$ is to the left. Let $k\in [n]$ be the desired size of $C_i \cup C_j$ and let $r \in [k-1]$ be the desired size of $C_i$. As before there are $\binom{n}{k}$ ways of choosing the locations of the $k$ points in $C_i \cup C_j$. Once these points are fixed, we know $C_i$ has to be the first $r$ many points, $C_j$ has to be the remaining $k-r$ points, and all other points cannot have label $i$ nor label $j$. This occurs with probability $p_i^r p_j^{k-r}q_{ij}^{n-k}$. Finally, summing over all values of $k$ and $r$ and adding a factor of 2 to account for flipping the sides of $C_i$ and $C_j$ we get that the third event occurs with probability $2\sum_{k=1}^n \binom{n}{k} \sum_{r=1}^{k-1}p_i^r p_j^{k-r}q_{ij}^{n-k}$.

Since $A_{ij}^c$ is the disjoint union of these three events, $P(A_{ij}^c)$ is equal to the sum of these three probabilities, which gives the desired result. For the uniform case, simply set $p_i=p_j=1/m$ in the general formula and note,
\begin{align*}
P(A_{ij}) = & 1- (\frac{m-2}{m})^n -\sum_{k=1}^n  \binom{n}{k}\bigg[ \bigg( 2 \sum_{r=1}^{k-1} \frac{1}{m^k} \bigg) +\frac{2}{m^k}  \bigg] (\frac{m-2}{m})^{n-k}\\
=& 1- (\frac{m-2}{m})^n - \frac{1}{m^n} \sum_{k=1}^n  \binom{n}{k}2k(m-2)^{n-k}\\
= & 1- \frac{2n(m-1)^{n-1}+(m-2)^n}{m^n}.
\end{align*}
\end{proof}

From this we can compute the expected number of edges in the random interval graph, which is the 1-skeleton of $\mathcal{N}_n$. The proof follows immediately from the above by the linearity of expectation. 

\begin{corollary} Let $X$ be the random variable equal to the number of edges in $\mathcal{N}_n$, the random interval graph.  Under the key assumptions in Section \ref{intro},
%Let $Y = (Y_1, \ldots, Y_n)$ be a random vector whose components are i.i.d. random variables such that $P(Y_j = i) = p_i >0$ for all $i \in [m]$. Let $\mathcal{N}_n = \mathcal{N}_n([n], Y)$ denote the empirical nerve of the random coloring induced by $Y$. 
$$ \mathbb{E}X = \hskip -.3cm \sum_{1 \leq i < j \leq m} \hskip -.4cm 1- q_{ij}^n -\sum_{k=1}^n \bigg[ \binom{n}{k} \bigg( 2 \sum_{r=1}^{k-1} p_i^r p_j^{k-r} \bigg) +p_i^k +p_j^k \bigg] q_{ij}^{n-k},$$
where $q_{ij} = 1-(p_i +p_j)$. In the uniform case where $p_i = {{1}\over{m}}$ for all $i\in [m]$, this expectation equals
$$ \binom{m}{2}\bigg( 1- \frac{2n(m-1)^{n-1}+(m-2)^n}{m^n}\bigg).$$
\end{corollary}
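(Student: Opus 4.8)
The plan is to deduce this directly from Theorem~\ref{ijedges} using nothing more than the linearity of expectation. First I would introduce, for each pair $\{i,j\}$ with $1 \le i < j \le m$, the indicator random variable $\mathbb{1}_{A_{ij}}$ that equals $1$ precisely when the edge $\{i,j\}$ belongs to $\mathcal{N}_n$. By definition the number of edges is $X = \sum_{1 \le i < j \le m} \mathbb{1}_{A_{ij}}$, and since $\mathbb{E}\,\mathbb{1}_{A_{ij}} = P(A_{ij})$, linearity of expectation gives $\mathbb{E}X = \sum_{1 \le i < j \le m} P(A_{ij})$, with no independence assumptions needed.

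Next I would substitute the closed-form expression for $P(A_{ij})$ obtained in Theorem~\ref{ijedges}, namely
$$P(A_{ij}) = 1 - q_{ij}^n - \sum_{k=1}^n \binom{n}{k}\bigg[\bigg(2\sum_{r=1}^{k-1} p_i^r p_j^{k-r}\bigg) + p_i^k + p_j^k\bigg] q_{ij}^{n-k},$$
where $q_{ij} = 1 - (p_i + p_j)$, into the sum over pairs. This yields the general formula stated in the corollary verbatim (up to the harmless rebracketing of the $\binom{n}{k}$ factor in the displayed statement).

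For the uniform case, I would again invoke the second half of Theorem~\ref{ijedges}, which already records that when $p_i = \tfrac{1}{m}$ for all $i$ one has $P(A_{ij}) = 1 - \frac{2n(m-1)^{n-1} + (m-2)^n}{m^n}$, independent of the particular pair $\{i,j\}$. Summing this constant over all $\binom{m}{2}$ pairs immediately produces $\binom{m}{2}\big(1 - \frac{2n(m-1)^{n-1} + (m-2)^n}{m^n}\big)$, as claimed.

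There is essentially no obstacle here: the only thing to be careful about is that the events $A_{ij}$ are \emph{not} independent (the empirical supports share the same underlying coloring), so one must not be tempted to multiply probabilities anywhere — but expectation is linear regardless of dependence, so the argument goes through cleanly. The ``work'' was all done in Theorem~\ref{ijedges}; this corollary is a one-line consequence.
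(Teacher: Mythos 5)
Your proposal is correct and matches the paper exactly: the paper likewise notes that the corollary "follows immediately from the above by the linearity of expectation," i.e., writing $X$ as a sum of edge indicators and substituting $P(A_{ij})$ from Theorem~\ref{ijedges}. Your added remark that the events $A_{ij}$ need not be independent, while linearity of expectation holds regardless, is a sound and worthwhile clarification.
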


%%%%%%%%%%%%%%%%%%%
%%%%%%%%%%%%%%%%%%%
%%%%----Section 4: Cliques, Maximum Degree, and Behavior in the limit
%%%%%%%%%%%%%%%%%%%
%%%%%%%%%%%%%%%%%%%

\section{Maximum Degree, Cliques, and Behavior in the limit.} \label{sec:cliques}

In this section we investigate the connectivity of the empirical nerve. First we study the maximum degree and clique number of $\mathcal{N}_n$. Then we show that as the number of samples $n$ goes to infinity, then $\mathcal{N}_n$ is almost surely the $(m-1)$-simplex.

%%%%%%%%%%%%%%%%%%
%%%%%%% Max Degree
%%%%%%%%%%%%%%%%%%

\subsection{Maximum Degree.}
The following result is a lower bound on the probability of finding an interval intersecting all others, i.e., that the maximum degree $\Deg(\mathcal{N}_n)$ of $\mathcal{N}_n$ is $m-1$. In our birdwatching story this can be interpreted as the probability of finding a species which overlaps in time with all others.

In the following theorem we let $\mathcal{X}_{m,k}^n$ denote the set of weak-compositions of $n$ with length $m$ containing exactly $k$-many non-zero parts \cite[p.~25]{EC1}. Formally, 
$\mathcal{X}_{m,k}^n =\{(x_1,...,x_m)\in \mathbb Z_{\geq 0}^m: \sum_{i=1}^m x_i=n, |\{x_i: x_i \neq 0\}| =k \}$. \\
Also let $M(x)=\frac{(x_1+x_2+...+x_m)!}{x_1!x_2!...x_m!}\prod_{i=1}^m p_i^{x_i}$ denote the multinomial distribution applied to the vector $x\in\mathcal{X}_{m,k}^n$ considering the associated probabilities $p_1,p_2,...,p_m$. Finally, let $S_n^k$ denotes the \emph{Stirling numbers} of the second kind \cite[p.~81]{EC1}.

\begin{theorem} \label{thmmaxdegree} Under the key assumptions in Section \ref{intro}, the maximum degree of  $\mathcal{N}_n$ satisfies
%Let $Y = (Y_1, \ldots, Y_n)$ be a random vector whose components are i.i.d. random variables such that $P(Y_j = i) = p_i >0$ for all $i \in [m]$. Let $\mathcal{N}_n = \mathcal{N}_n([n], Y)$ denote the empirical nerve of the random coloring induced by $Y$. Then\\

$P(\Deg(\mathcal{N}_n)=m-1)\geq$
$$\max_{r}\{[1- \sum_{k=1}^{m-1} \frac{k^r}{m^r}\binom{m}{k}\sum\limits_{x\in\mathcal{X}_{m,k}^r} \hskip -0.3cm M(x)  (m-k)^rp_{*}^r][\hskip -0.2cm \sum\limits_{x\in\mathcal{X}_{m,m}^{n-2r}} \hskip -0.4cm M(x)+\hskip -0.4cm \sum\limits_{x\in\mathcal{X}_{m,m-1}^{n-2r}} \hskip -0.4cm M(x)]\}.$$

Moreover, in the uniform case where $p_i = {{1}\over{m}}$ for all $i\in [m]$, we have that\\
%\smallskip

%$P(\Deg(\mathcal{N}_n)=m-1)\geq$\\ $\max_{r}\{[1-\frac{1}{m^{2r}} \sum\limits_{k=1}^{m-1} \binom{m}{k}k!S(r,k)(m-k)^r][\frac{m!}{m^{n-2r}}$ $S(n-2r,m)+\frac{(m-1)!}{(m-1)^{n-2r}}S(n-2r,m-1)]\}.$\\

$P(\Deg(\mathcal{N}_n)=m-1)\geq$ $$\max_{r}\{[1-\frac{m!}{m^{2r}} \sum\limits_{k=1}^{m-1} \frac{(m-k)^r}{(m-k)!}S_r^k][\frac{m!}{m^{n-2r}}S_{n-2r}^{m}+\frac{(m-1)!}{(m-1)^{n-2r}}S_{n-2r}^{m-1}]\}.$$

\end{theorem}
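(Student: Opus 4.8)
The plan is to bound $P(\Deg(\mathcal{N}_n) = m-1)$ from below by splitting the $n$ sample points into three blocks: a ``left prefix'' of length $r$, a ``right suffix'' of length $r$, and a ``middle'' of length $n-2r$, and then engineering a favorable event on each block. The key observation is this: if we can find one label, say $\ell$, that appears in \emph{both} the prefix and the suffix, then $I_n(\ell)$ stretches from somewhere in the first $r$ points to somewhere in the last $r$ points, hence contains the entire middle block. So if, in addition, the middle block witnesses every other label $i \neq \ell$ — i.e., every label $i$ appears at least once among the middle $n-2r$ points — then $I_n(i) \cap I_n(\ell) \neq \emptyset$ for all $i$, so $\ell$ has degree $m-1$ (or we fall into the case where one label is absent altogether and $\ell$ has degree $m-2 = (m-1)-1$ among the present vertices, which is why the bound has the two Stirling-type terms). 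Because these three blocks involve disjoint sets of i.i.d.\ coordinates, the probabilities multiply, which is where the product structure of the claimed bound comes from.

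First I would make precise the middle-block event. Conditioning on the multiset of labels landing in the middle block, the event ``all $m$ labels appear'' (resp.\ ``exactly $m-1$ appear'') is a surjection-type count: the number of ways to color $n-2r$ ordered points with all $m$ colors used is $m!\,S_{n-2r}^m$ in the uniform case, giving probability $\frac{m!}{m^{n-2r}}S_{n-2r}^m$; similarly $\binom{m}{m-1}(m-1)!\,S_{n-2r}^{m-1}/m^{n-2r}$ — wait, more carefully, one must be slightly careful that ``at least $m-1$ colors present in the middle'' is what we want, and in the non-uniform case this is exactly $\sum_{x \in \mathcal{X}_{m,m}^{n-2r}} M(x) + \sum_{x \in \mathcal{X}_{m,m-1}^{n-2r}} M(x)$ by definition of $M$ and $\mathcal{X}$. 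This accounts for the second bracketed factor.

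Next I would handle the prefix/suffix event: the complementary bad event is that \emph{no} label is common to both the length-$r$ prefix and the length-$r$ suffix. I would bound the probability of this bad event by a union-type / first-moment argument over which labels appear in the prefix. If the prefix uses exactly a $k$-subset $K$ of colors (with $k \le m-1$, else it cannot be disjoint from the suffix's color set of size $\ge 1$... actually if $k=m$ the suffix automatically shares a color, so only $k \le m-1$ contributes to ``bad''), the prefix pattern has probability governed by $M$ on $\mathcal{X}_{m,k}^r$, and then the suffix must avoid all of $K$, which has probability $(\sum_{i \notin K} p_i)^r \le ((m-k)\,p_*)^r$ — hmm, this needs $\sum_{i \notin K} p_i \le (m-k)\max p_i$, and the bound as written uses $p_*$ in a way I would need to reconcile; I suspect the intended reading groups the $p_*^r$ with the $M(x)$ factors, i.e., bounds the whole prefix-and-avoid probability crudely, and the $k^r/m^r \cdot \binom{m}{k}$ and Stirling $S_r^k$ pieces come from overcounting the prefix colorings. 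The cleanest route in the uniform case: the prefix coloring using exactly colors $K$ with $|K|=k$ has probability $S_r^k \cdot (\text{something})/m^r$, and avoiding $K$ in the suffix costs $((m-k)/m)^r$; summing over the $\binom{m}{k}$ choices of $K$ and then over $k$ yields exactly $\frac{m!}{m^{2r}}\sum_{k=1}^{m-1}\frac{(m-k)^r}{(m-k)!}S_r^k$ after collecting the binomial and factorial factors. Finally, I would combine: $P(\Deg = m-1) \ge P(\text{good prefix/suffix})\cdot P(\text{middle has} \ge m-1 \text{ colors})$, and since $r$ was a free parameter chosen for convenience, take the maximum over $r$.

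\textbf{Main obstacle.} The delicate point is getting the prefix/suffix bad-event bound to land exactly on the stated expression — in particular justifying the crude step that replaces $\sum_{i \notin K} p_i$ by $(m-k)p_*$ (which goes the wrong way unless one is careful, so the actual argument must instead bound $M(x)$-weighted prefix probabilities and separately the suffix-avoidance, absorbing slack into the inequality) and matching the combinatorial bookkeeping (the $\binom{m}{k}$, the $k^r/m^r$ versus $S_r^k$, the $(m-k)!$ in the denominator) to the Stirling-number identity $\sum_K (\text{surjections onto } K) = \binom{m}{k} k!\, S_r^k$. I expect the block-decomposition and the independence/multiplicativity to be routine; it is the tight accounting of the first bracket — and the observation that missing one color still leaves a degree-$(m-1)$ vertex only in the right configuration, which is why ``$\ge m-1$ colors in the middle'' rather than ``$= m$'' is the correct middle event — that will require the most care.
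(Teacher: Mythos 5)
Your proposal follows essentially the same route as the paper's proof: the same decomposition into a length-$r$ prefix, a length-$(n-2r)$ middle, and a length-$r$ suffix, the same two events (a color class spanning both prefix and suffix, whose empirical support then covers the whole middle block, and the middle block being colored with at least $m-1$ colors), the same bound on the complementary prefix/suffix event by summing over the number $k$ of colors used in the prefix and forcing the suffix to avoid them, followed by independence of the disjoint blocks and maximization over the free parameter $r$. The one step you flag as delicate --- replacing $\sum_{i\notin K}p_i$ by $(m-k)p_*$ --- is resolved exactly as you suspect one must: in this theorem the paper takes $p_*=\max_i p_i$ (so each of the $(m-k)^r$ suffix colorings avoiding $K$ has probability at most $p_*^r$, and the inequality points the right way), in contrast to Proposition \ref{Null_small_prop}, where $p_*$ denotes the minimum.
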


\begin{proof}
Fix some $1\leq r \leq \frac{n-m}{2}$ and consider the sets $L=\{1,2,...,r\}$, $C=\{r,r+1,...,n-(r+1)\}$ and $R=\{n-r,n-(r-1),...,n\}$. If the following events hold, we can guarantee that $\Deg(\mathcal{N}_n)=m-1$.\\
$A=$ $\{$There exists a chromatic class with points in $L$ and $R\}$,\\
$B=$ $\{$There exists at least one point with each of the remaining $m-1$ colors in $C\}$. \\

In order to calculate $P(A)$, we will compute the probability of its complement $A^c$, i.e., the event where no color appears in both $L$ and $R$.
First we calculate the probability of $L$ being colored with exactly $k$ colors with $1\leq k \leq m-1$. Observe that there are $\binom{m}{k}$ ways to choose  these colors and $k^r \sum_{x\in\mathcal{X}_{m,k}^r} M(x)$ ways to color $L$ with them. As there exist $m^r$ different colorings with all the $m$ colors, we have that for a fixed $k$ the probability is $$\frac{1}{m^r}k^r\binom{m}{k}\sum\limits_{x\in\mathcal{X}_{m,k}^r}\hskip -.25cm M(x).$$ 

%\textcolor{red}
In order for $A^c$ to occur, we need that $R$ be colored with only the $(m-k)$ remaining colors. Note that this event is independent from the coloring of $L$ as the two sets are disjoint. There are $(m-k)^r$ different ways of coloring $R$, and each occurs with probability at most $p_{*}^r$, where $p_*=\max\{p_i:1\leq i \leq m\}$. Thus, for  a fixed $ k $ we have that the probability that no color appears in both $L$ and $R$ is at most
$$\left[ \frac{1}{m^r}k^r\binom{m}{k}\sum\limits_{x\in\mathcal{X}_{m,k}^r} \hskip -.25cm M(x)\right] \left[ (m-k)^rp_{*}^r\right].$$

Then, by summing over all $k$ we have that

$$P(A^c)\leq \sum_{k=1}^{m-1}\left[ \frac{1}{m^r}k^r\binom{m}{k}\sum\limits_{x\in\mathcal{X}_{m,k}^r} \hskip -.25cm M(x)\right] \left[ (m-k)^rp_{*}^r\right] ,$$

which implies that

$$P(A)\geq 1- \sum_{k=1}^{m-1} \frac{1}{m^r}k^r\binom{m}{k}\sum\limits_{x\in\mathcal{X}_{m,k}^r} \hskip -.25cm M(x)  (m-k)^rp_{*}^r.$$

To compute $P(B)$, note that the probability of coloring $C$ with $m$ or $m-1$ colors is exactly
$$\sum\limits_{x\in\mathcal{X}_{m,m}^{n-2r}} \hskip -.4 cm M(x)+\sum\limits_{x\in\mathcal{X}_{m,m-1}^{n-2r}} \hskip -.5cm M(x).$$

Finally, as $A$ and $B$ are independent events, we have $P(\Deg(\mathcal{N}_n)=m-1)$ is greater than
$$[1- \sum_{k=1}^{m-1} \frac{1}{m^r}k^r\binom{m}{k}\sum\limits_{x\in\mathcal{X}_{m,k}^r} \hskip -.3cm M(x)  (m-k)^rp_{*}^r][\sum\limits_{x\in\mathcal{X}_{m,m}^{n-2r}} \hskip -.4 cm M(x)+\sum\limits_{x\in\mathcal{X}_{m,m-1}^{n-2r}} \hskip -.5cm M(x)].$$

Maximizing over $r$ gives the desired result. For the case uniform, we just apply $p_{*}=1/m$ and use the former equality together with the fact that $k!/k^nS_n^k=\sum_{x\in\mathcal{X}_{m,k}^n}M(x)$.
\end{proof}

%%%%%%%%%%%%%%%%%%%
%%%%---- Cliques 
%%%%%%%%%%%%%%%%%%%
\subsection{Cliques.}
The expected clique number of $\mathcal{N}_n$ is of special interest to us. In our birdwatching story this corresponds to the maximal subset of birds whose time intervals all intersect. While we do not compute the expected clique number exactly, the next theorem provides a lower bound on the expected clique number which performs very well in simulations (see the Appendix).

\begin{lemma}  Under the key assumptions in Section \ref{intro},
%Let $Y = (Y_1, \ldots, Y_n)$ be a random vector whose components are i.i.d. random variables such that $P(Y_j = i) = p_i >0$ for all $i \in [m]$. 
 the probability that an arbitrary point $x\in [n]$ lies inside the interval of color $i$, $I_n(i)$, is exactly 
% $$ 1-[(1-p_i)^{x}+ (1-p_i)^{n-x+1} -(1-p_i)^{n}].$$
$ 1-q_i^{x}- q_i^{n-x+1} +q_i^{n},$
where $q_i=1-p_i.$
\end{lemma}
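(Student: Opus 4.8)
The plan is to translate the geometric condition ``$x \in I_n(i)$'' into a purely combinatorial statement about the coloring and then apply inclusion--exclusion. Write $C_i = \{j \in [n] : Y_j = i\}$, so that $I_n(i) = \mathrm{Conv}(C_i)$ is the closed interval with endpoints $\min C_i$ and $\max C_i$ (and is empty when $C_i = \emptyset$). A point $x \in [n]$ lies in $I_n(i)$ exactly when $\min C_i \le x \le \max C_i$, i.e. when there is an index $j \le x$ with $Y_j = i$ \emph{and} an index $j \ge x$ with $Y_j = i$. Denote by $A$ the event ``some index in $\{1,\dots,x\}$ receives color $i$'' and by $B$ the event ``some index in $\{x,\dots,n\}$ receives color $i$''. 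Then $\{x \in I_n(i)\} = A \cap B$, and the goal becomes computing $P(A \cap B)$.

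Next I would compute $P(A)$ and $P(B)$ directly from the i.i.d.\ assumption. Since the $Y_j$ are independent with $P(Y_j \ne i) = q_i$, the probability that \emph{none} of the $x$ indices $1,\dots,x$ gets color $i$ is $q_i^{x}$, so $P(A) = 1 - q_i^{x}$. The block $\{x, x+1, \dots, n\}$ contains $n - x + 1$ indices, so likewise $P(B) = 1 - q_i^{\,n-x+1}$.

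The one slightly subtle point is that $A$ and $B$ are \emph{not} independent, because both depend on the color of the shared index $x$; so instead of multiplying I would use $P(A \cap B) = P(A) + P(B) - P(A \cup B)$. The payoff is that $A \cup B$ has a clean description: it is the event that color $i$ appears somewhere in $\{1,\dots,x\} \cup \{x,\dots,n\} = [n]$, i.e.\ the event $C_i \ne \emptyset$, which has probability $1 - q_i^{\,n}$. Substituting,
\[
P\bigl(x \in I_n(i)\bigr) = (1 - q_i^{x}) + (1 - q_i^{\,n-x+1}) - (1 - q_i^{\,n}) = 1 - q_i^{x} - q_i^{\,n-x+1} + q_i^{\,n},
\]
which is the claimed formula. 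As a sanity check, at $x = 1$ and at $x = n$ the right-hand side collapses to $1 - q_i = p_i$, consistent with the fact that an extreme point of $[n]$ lies in $I_n(i)$ if and only if that point is itself colored $i$. The main (and only minor) obstacle is resisting the temptation to treat $A$ and $B$ as independent; once one commits to inclusion--exclusion and observes that the two overlapping blocks jointly cover all of $[n]$, the computation is immediate.
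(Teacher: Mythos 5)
Your proof is correct and is essentially the paper's argument in De Morgan-dual form: your events $A$ and $B$ are the complements of the paper's events $L$ (no label $i$ at or before $x$) and $R$ (no label $i$ at or after $x$), and computing $P(A\cap B)$ via $P(A\cup B)$ is the same inclusion--exclusion the paper applies to $P(L\cup R)$. As a side note, your computation also confirms that the paper's intermediate claim $P(L\cap R)=p_i^{n}$ is a typo for $q_i^{n}$, which is the value actually used in its final step.
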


\begin{proof}
Fix an arbitrary $x\in [n]$ and define the event $A = \{x\in I_n(i)\}$. Note that in order for $A$ to occur either $x$ lies between two points with label $i$ or $x$ itself is labeled $i$. Now consider the complementary probability event, $A^c = \{x \notin I_n(i)\}$. Next define the events $L,R$ where $L=\{$none of the points \emph{less than or equal} to $x$ have label $i\}$ and $R=\{$none of the points \emph{greater than or equal to} $x$ have label $i\}$. Note $A^c = L \cup R$ and $P(L)=q_i^{x}$, $P(R)=q_i^{n-x+1}$ and $P(L\cap R)=p_i^{n}$. Therefore, by the inclusion-exclusion principle we have, 

$$P(A^c)= P(L)+P(R)-P(L \cap R)   = q_i^{x}+ q_i^{n-x+1} -q_i^{n},
$$ 
and hence
$P(A)= 1-q_i^{x}- q_i^{n-x+1} +q_i^{n}.$
\end{proof}

\begin{theorem} \label{expectedcliquenum}  
%Let $Y = (Y_1, \ldots, Y_n)$ be a random vector whose components are i.i.d. random variables such that $P(Y_j = i) = p_i >0$ for all $i \in [m]$. Let $\mathcal{N}_n = \mathcal{N}_n([n], Y)$ denote the empirical nerve of the random coloring induced by $Y$, and
 Let $\omega$ be the random variable equal to the clique number of $\mathcal{N}_n$, i.e., the size of the largest clique in the 1-skeleton of $\mathcal{N}_n$. Under the key assumptions in Section \ref{intro} we have
%\begin{center}
%$\mathbb{E }$ $\omega \geq \sum\limits_{i=1}^m \bigg(  1-[(1-p_i)^{\lceil \frac{n}{2}\rceil}+ (1-p_i)^{n-\lceil \frac{n}{2}\rceil+1} -(1-p_i)^{n}]\bigg).$
%\end{center}
\begin{center}
$\mathbb{E }$ $\omega \geq \sum\limits_{i=1}^{m}(  1-q_i^{\lceil \frac{n}{2}\rceil}-q_i^{n-\lceil \frac{n}{2}\rceil +1}+q_i^{n})$
\end{center}
where $q_i=1-p_i$. Moreover, in the uniform case where $p_i= {{1}\over{m}}$ for all $i\in [m]$, we have that
\begin{center}
$\mathbb{E }$ $\omega \geq m - \big(\frac{m-1}{m}\big)^{\lceil \frac{n}{2}\rceil} - \big(\frac{m-1}{m}\big)^{n-\lceil \frac{n}{2}\rceil +1} + \big(\frac{m-1}{m}\big)^{n}.$
\end{center}\end{theorem}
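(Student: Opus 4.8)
The plan is to leverage the one‑dimensional Helly property recorded in the introduction: a family of intervals forms a clique in the interval graph precisely when it has a common point. So fix any sample position $x \in [n]$ and let $Z_x$ denote the number of colors $i$ whose empirical support $I_n(i)$ contains $x$. All of these intervals pass through $x$, hence pairwise intersect, so the corresponding vertices form a clique in the $1$‑skeleton of $\mathcal{N}_n$; consequently $\omega \ge Z_x$ on every outcome, and therefore $\mathbb{E}\,\omega \ge \mathbb{E}\,Z_x$ for every fixed $x$.

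Next, compute $\mathbb{E}\,Z_x$ by linearity of expectation: writing $Z_x = \sum_{i=1}^m \mathbf{1}_{\{x \in I_n(i)\}}$, the preceding Lemma gives $\mathbb{E}\,Z_x = \sum_{i=1}^m \bigl(1 - q_i^{x} - q_i^{n-x+1} + q_i^{n}\bigr)$ with $q_i = 1-p_i$. Since $\mathbb{E}\,\omega \ge \mathbb{E}\,Z_x$ holds for all $x$, we are free to pick the $x \in [n]$ that maximizes this sum. For each color, $x \mapsto -q_i^{x} - q_i^{n-x+1}$ is maximized over the reals at $x = (n+1)/2$, and — the one point worth checking — that maximizer is independent of $q_i$, because $q_i^{x} = q_i^{n-x+1}$ iff $x = n-x+1$. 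Restricting to integers in $[n]$, the value $x = \lceil n/2 \rceil$ attains the maximum for all colors simultaneously (for even $n$ the neighbours $n/2$ and $n/2+1$ tie, by symmetry of $q_i^{n/2}+q_i^{n/2+1}$), with $n - \lceil n/2\rceil + 1$ the matching exponent. Substituting $x = \lceil n/2 \rceil$ into the expression for $\mathbb{E}\,Z_x$ yields the claimed inequality, and the uniform case follows at once by setting $q_i = (m-1)/m$ for every $i$ and collecting terms.

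There is no serious obstacle here; the Lemma does the analytic work and the rest is bookkeeping. The only subtleties are (i) making explicit that a shared point forces a clique, so that $\omega \ge Z_x$ holds sample‑path‑wise and not merely in expectation, and (ii) observing that a single fixed, color‑independent choice of $x$ already gives the clean stated form — a color‑dependent choice would be illegitimate, since $\omega$ is one random variable, not a sum of per‑color contributions. It is worth remarking that the argument discards the slack in $\max_x \mathbb{E}\,Z_x \le \mathbb{E}[\max_x Z_x] \le \mathbb{E}\,\omega$, which explains why the bound, though it performs very well in simulations, is not claimed to be exact.
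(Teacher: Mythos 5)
Your proposal is correct and follows essentially the same route as the paper's own proof: both invoke the preceding lemma, count the intervals containing a single point (which form a clique by the interval/Helly property), optimize the choice of point to $x=\lceil n/2\rceil$ using the symmetry of $q_i^{x}+q_i^{n-x+1}$ about $(n+1)/2$, and finish by linearity of expectation. The only difference is cosmetic — you optimize over $x$ after writing the bound for general $x$, whereas the paper fixes $x=\lceil n/2\rceil$ first — and your explicit remarks on the pathwise inequality $\omega\ge Z_x$ and the color-independence of the maximizer are accurate refinements of what the paper leaves implicit.
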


\begin{proof}
By the preceding lemma we know that the probability that $x\in I_n(i)$ for some $x\in [n]$ is exactly
$1-q_i^{x}- q_i^{n-x+1} +q_i^{n}.$
To maximize this quantity over $x \in [n]$ we will first minimize $f(x) = q_i^{x}+ q_i^{n-x+1} -q_i^{n}$ over all $x$. Note $f$ is convex so a simple calculus exercise shows that $f$ is minimized at  $x^* = \frac{n+1}{2}$. This can also be seen directly from the fact that $f$ is convex and symmetric about $\frac{n+1}{2}$.
%\begin{align*}
%\frac{\partial f}{\partial x} &= (1-p_i)^{x^*}\log(1-p_i) -(1-p_i)^{n-x^*+1}\log(1-p_i) =0  \\
%&\iff \log(1-p_i)[(1-p)^{x^*} - (1-p_i)^{n-x^*+1}] =0 \\
%&\iff (1-p)^{x^*} = (1-p_i)^{n-x^*+1} \\
%&\iff x^* = \frac{n+1}{2}.
%\end{align*}
When $n$ is odd the minimizer $x^*$ is an integer and lies in $[n]$. To handle the case when $n$ is even, note that $f$ is symmetric about the minimizer $x^*$. Therefore,
%\begin{align*}
%f(x^* -h) &= (1-p_i)^{\frac{n+1}{2}-h}+ (1-p_i)^{n - (\frac{n+1}{2} -h) +1} -(1-p_i)^{n} \\
%&= (1-p_i)^{\frac{n+1}{2}-h}+ (1-p_i)^{\frac{2n -n-1+2}{2} +h} -(1-p_i)^{n} \\
%&= (1-p_i)^{\frac{n+1}{2}-h}+ (1-p_i)^{\frac{n+1}{2}+h} -(1-p_i)^{n}.
%\end{align*}
when $n$ is even, $f$ is minimized over $[n]$ at the integers closest to $x^*$, which are $\frac{n}{2}$ and $\frac{n}{2}+1$. We see then that $f$ is minimized over $[n]$ at the point $x = \lceil \frac{n}{2} \rceil$, which holds whether $n$ is even or odd.

Now, for $i = 1, \ldots, m$ let $X_i$ be the indicator random variable which equals $1$ if $ \lceil \frac{n}{2} \rceil \in I_n(i)$ and is $0$ otherwise and set $X = \sum_{i=1}^m X_i$, so $X$ counts the number of intervals containing the point $ \lceil \frac{n}{2} \rceil$.  Note that the clique number $\omega \geq X$, so
%\begin{align*}
%\mathbb{E } \text{ } \omega \geq \mathbb{E} X =  \sum\limits_{i=1}^{m} \mathbb{E} X_i
%&=\sum\limits_{i=1}^{m}P(X_i) \\
%&=\sum\limits_{i=1}^{m}\bigg(  1-[(1-p_i)^{\lceil \frac{n}{2}\rceil}+ (1-p_i)^{n-\lceil \frac{n}{2}\rceil +1} -(1-p_i)^{n}]\bigg).
%\end{align*}
$$\mathbb{E } \text{ } \omega \geq \mathbb{E} X =  \sum\limits_{i=1}^{m} \mathbb{E} X_i
=\sum\limits_{i=1}^{m}P(X_i)=\sum\limits_{i=1}^{m}(  1-q_i^{\lceil \frac{n}{2}\rceil}-q_i^{n-\lceil \frac{n}{2}\rceil +1}+q_i^{n}).$$

The result for the uniform case follows directly by setting $p_i =\frac{1}{m}$ for every $i$.
\end{proof}

%%%%%%%%%%%%%%%%%%
%%%%%%% Behavior as n --> infinity
%%%%%%%%%%%%%%%%%%
\subsection{Behavior of the nerve complex as the number of samples goes to infinity.}
Note that as the number of samples $n$ grows large, Theorem \ref{expectedcliquenum} implies that the expected clique number $\mathbb{E }$ $\omega \to m$. Since $\omega$ only takes values in $\{1, \ldots, m\}$ it follows that the clique number also converges to $m$ in probability. Thus,
 as $n$ goes to infinity, the probability that the nerve of the observations is the $(m-1)$-simplex denoted by $\Delta_{m-1}$, i.e., a complete graph, goes to 1. In our birdwatcher analogy, this implies that with sufficiently many observations one is almost sure to find a common interval of time where all $m$ species can be observed. 
The following theorem provides a lower bound on this convergence. 

\begin{theorem}\label{probtosimplex} Under the key assumptions in section \ref{intro}, the probability that $\mathcal{N}_n$ is an $(m-1)$-simplex (or equivalently the graph is a complete graph $K_m$) satisfies
%Let $Y = (Y_1, \ldots, Y_n)$ be a random vector whose components are i.i.d. random variables such that $P(Y_j = i) = p_i >0$ for all $i \in [m]$. Let $\mathcal{N}_n = \mathcal{N}_n([n], Y)$ denote the empirical nerve of the random coloring induced by $Y$, then 
$$P(\mathcal{N}_n = \Delta_{m-1}) \geq ( \sum\limits_{x \in \mathcal{X}_{m}^{\lfloor\frac{n}{2}\rfloor}} \hskip -.25cm M(x))^2$$
where $\mathcal{X}_m^{\lfloor\frac{n}{2}\rfloor} = \{(x_1,x_2,...,x_m)\in \mathbb{N}^m:\sum_{i=1}^m x_i= \lfloor \frac{n}{2} \rfloor \}.$\\ 
In the uniform case where $p_i = \frac{1}{m}$ for every $i\in[m]$, this gives that 
$$ 
P(\mathcal{N}_n = \Delta_{m-1}) \geq  \left( \frac{m!}{m^{\lfloor \frac{n}{2} \rfloor }} S_{\lfloor \frac{n}{2} \rfloor}^{m} \right)^2
$$
where, again, $S_n^k$ denotes the Stirling numbers of the second kind.
\end{theorem}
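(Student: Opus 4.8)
The plan is to exhibit a simple event that forces $\mathcal{N}_n$ to be the complete graph and then bound its probability by splitting the sample points into two independent halves, just as in the proof of Theorem~\ref{thmmaxdegree}. By Lemma~\ref{stat_lemma} we may assume $\mathcal{P}=[n]$. Partition $[n]$ into the left block $L=\{1,\dots,\lfloor n/2\rfloor\}$ and the right block $R=\{\lfloor n/2\rfloor+1,\dots,n\}$; these are disjoint, cover $[n]$, and satisfy $|L|=\lfloor n/2\rfloor$ and $|R|=\lceil n/2\rceil\geq\lfloor n/2\rfloor$.

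First I would show that if every one of the $m$ colors occurs among the points of $L$ \emph{and} every color occurs among the points of $R$, then $\mathcal{N}_n=\Delta_{m-1}$. Indeed, fix a color $i$: there is an $a\in L$ and a $b\in R$ with $Y_a=Y_b=i$, so $a\leq \lfloor n/2\rfloor<b$ and hence $\lfloor n/2\rfloor\in[a,b]\subseteq I_n(i)$. Thus the point $\lfloor n/2\rfloor$ lies in all $m$ empirical supports, so every subfamily of them has nonempty intersection and all $m$ colors are present; therefore $\mathcal{N}_n$ is the full $(m-1)$-simplex.

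Next I would compute the probabilities of the two sub-events. Because $L$ and $R$ involve disjoint subsets of the independent variables $Y_1,\dots,Y_n$, the events ``all colors appear in $L$'' and ``all colors appear in $R$'' are independent. Since the $\lfloor n/2\rfloor$ labels in $L$ are i.i.d.\ with $P(Y_j=i)=p_i>0$, the probability that $L$ sees every color equals $\sum_{x\in\mathcal{X}_m^{\lfloor n/2\rfloor}}M(x)$, the sum of the multinomial weights over compositions of $\lfloor n/2\rfloor$ into $m$ positive parts (so $\mathcal{X}_m^{\lfloor n/2\rfloor}=\mathcal{X}_{m,m}^{\lfloor n/2\rfloor}$). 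For $R$, choose any $R'\subseteq R$ with $|R'|=\lfloor n/2\rfloor$; the event ``all colors appear in $R'$'' is contained in ``all colors appear in $R$'', so $P(\text{all colors appear in }R)\geq\sum_{x\in\mathcal{X}_m^{\lfloor n/2\rfloor}}M(x)$. Multiplying the two lower bounds using independence gives
\[
P(\mathcal{N}_n=\Delta_{m-1})\ \geq\ \Big(\sum_{x\in\mathcal{X}_m^{\lfloor n/2\rfloor}}M(x)\Big)^2 .
\]
The uniform case then follows from the identity $k!\,S_r^k/k^r=\sum_{x\in\mathcal{X}_{m,k}^r}M(x)$ already invoked in Theorem~\ref{thmmaxdegree}, which for $k=m$ and $r=\lfloor n/2\rfloor$ yields $\sum_{x\in\mathcal{X}_m^{\lfloor n/2\rfloor}}M(x)=m!\,S_{\lfloor n/2\rfloor}^m/m^{\lfloor n/2\rfloor}$.

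The argument is essentially elementary, so there is no deep obstacle; the points needing care are (i) verifying that the sufficient event really does force the complete graph on \emph{all} $m$ vertices (not a smaller complete graph), which is why one checks that $L$ alone already guarantees every color is present, and (ii) the parity of $n$: when $n$ is odd the right block is strictly larger than the left, which is why an inequality rather than an equality is used for $R$ and why the clean symmetric square $\big(\sum M(x)\big)^2$ is the natural target rather than the exact probability. One could tighten the constant by using all $\lceil n/2\rceil$ points of $R$ or by optimizing over the splitting point, but the stated symmetric bound already suffices for the asymptotic conclusion $P(\mathcal{N}_n=\Delta_{m-1})\to 1$.
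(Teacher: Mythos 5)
Your proposal is correct and follows essentially the same route as the paper's own proof: force every color to appear in the first $\lfloor n/2\rfloor$ points and in the last (roughly) $\lfloor n/2\rfloor$ points, use independence of the two disjoint blocks to square the single-block probability, and convert to Stirling numbers in the uniform case. The only differences are cosmetic and in your favor — you spell out why the event forces $\mathcal{N}_n=\Delta_{m-1}$ (the point $\lfloor n/2\rfloor$ lies in every empirical support) and handle the odd-$n$ leftover point by monotonicity, both of which the paper leaves implicit.
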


\begin{proof} 

%{\color{red} QUE COSA ES $\mathcal{X}$??????? POR FAVOR CORREGIR ESTO!!}
For each vector $x\in\mathcal{X}_m^{\lfloor\frac{n}{2}\rfloor}$ the multinomial $M(x)$ computes the probability that there exist exactly 
$x_i$ points with color $i$ for every $1\leq i \leq m$. Therefore, the sum over all the vectors of $\mathcal{X}_m^{\lfloor\frac{n}{2}\rfloor}$ gives us the probability of having at least one point of each color.\\
Now, we consider the events $L=$ $\{$the first $\lfloor \frac{n}{2} \rfloor$ points are colored with exactly $m$ colors$\}$ and
$R=$ $\{$the last $\lfloor \frac{n}{2} \rfloor $ points are colored with exactly $m$ colors$\}$. We have $$P(L)=P(R)=\sum\limits_{x \in \mathcal{X}_m^{\lfloor \frac{n}{2}\rfloor}} \hskip -.25cm M(x).$$
Then $P(\mathcal{N}_n=\Delta_{m-1})\geq P(L\cap R)$ and as $L$ and $R$ are independent events, we conclude
$$P(\mathcal{N}_n=\Delta_{m-1})\geq P(L\cap R)=P(L)P(R)=( \sum\limits_{x \in \mathcal{X}_m^{\lfloor \frac{n}{2}\rfloor}} \hskip -.25cm M(x))^2.$$

The result for the uniform case follows because $k!/k^nS_n^k=\sum_{x\in\mathcal{X}_{m,k}^n}M(x)$.
\end{proof}

Theorem \ref{probtosimplex} tells us how likely it is for the empirical nerve of $n$ samples to form the $(m-1)$-simplex for fixed $n$. A related question asks what is the \emph{first} observation $n$ for which this occurs, i.e., if we have a sequence of observations $Y_1, Y_2, \ldots$ what is the least $n$ such that $\mathcal N_n((Y_1, \ldots, Y_n)) = \Delta_{m-1}$? We call this quantity the \emph{waiting time} to form the $(m-1)$-simplex and provide a lower bound on its expectation below. 

\begin{theorem}\label{waitingtime} 
%Let $Y= Y_1, Y_2, \ldots$ be a sequence i.i.d. random variables such that $P(Y_j = i) = p_i >0$ for all $i \in [m]$. Let $\mathcal{N}_n = \mathcal{N}_n([n], Y)$ denote the empirical nerve of the random coloring induced by the first $n$ variables, $(Y_1, \ldots, Y_n)$. 
Let $X$ be the random variable for the waiting time until $\mathcal N_n = \Delta_{m-1}$, explicitly $X = \inf\{n \in \mathbb N : \mathcal N_n = \Delta_{m-1} \}$. Then, under the key assumptions in Section \ref{intro}, we have
$
\mathbb E X \leq 2 \int_0^\infty \Big(1- \prod_{i=1}^m (1-e^{-p_ix}) \Big) dx.
$
Moreover, in the uniform case, where $p_i = {{1}\over{m}}$ for all $i\in [m]$, we have that
$
\mathbb E X \leq 2m \sum_{i=1}^m \frac{1}{i}.
$
\end{theorem}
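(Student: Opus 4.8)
The plan is to bound $X$ above by the total number of samples needed to run \emph{two} consecutive rounds of the classical coupon collector process on the colors $1,\dots,m$, and then to invoke the known formula for the expected coupon collector time. Concretely, let $T=\inf\{n\in\mathbb N:\{Y_1,\dots,Y_n\}=[m]\}$ be the first index by which every color has appeared at least once. Since $T$ is a stopping time for the i.i.d.\ sequence $(Y_k)$, the shifted sequence $Y_{T+1},Y_{T+2},\dots$ is again i.i.d.\ with the same law, so if $T'$ denotes the number of \emph{additional} samples after time $T$ needed to observe all $m$ colors a second time, then $T'$ is distributed as $T$; in particular $\mathbb E T'=\mathbb E T$.

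The key observation is that $X\le T+T'$. Indeed, put $n=T+T'$. For each color $i$ the first occurrence of $i$ has index $\le T$, so the left endpoint of $I_n(i)$ is $\le T$; and by definition of $T'$ some occurrence of $i$ lies in $\{T+1,\dots,T+T'\}$, so the right endpoint of $I_n(i)$ is $\ge T+1$. Hence the point $T$ lies in $I_n(i)$ for every $i$, so all $m$ empirical supports are nonempty and mutually intersecting, and therefore $\mathcal N_n=K_m=\Delta_{m-1}$ (this is the same mechanism used implicitly in the proof of Theorem~\ref{probtosimplex}). Consequently $X\le n=T+T'$, and by linearity of expectation $\mathbb E X\le\mathbb E T+\mathbb E T'=2\,\mathbb E T$.

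It then remains to evaluate $\mathbb E T$. Writing $\mathbb E T=\sum_{n\ge 0}P(T>n)$ and applying inclusion--exclusion to the events ``color $i$ is absent from the first $n$ samples'' gives $P(T>n)=\sum_{\emptyset\ne S\subseteq[m]}(-1)^{|S|+1}(1-\sum_{i\in S}p_i)^n$; summing the geometric series in $n$ (valid since each $\sum_{i\in S}p_i>0$) yields $\mathbb E T=\sum_{\emptyset\ne S\subseteq[m]}\frac{(-1)^{|S|+1}}{\sum_{i\in S}p_i}$. Expanding $\prod_{i=1}^m(1-e^{-p_i x})=\sum_{S\subseteq[m]}(-1)^{|S|}e^{-x\sum_{i\in S}p_i}$ and integrating termwise shows that this equals $\int_0^\infty\big(1-\prod_{i=1}^m(1-e^{-p_i x})\big)\,dx$, which is the first claimed bound. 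For the uniform case I would either quote the classical value $\mathbb E T=mH_m=m\sum_{i=1}^m \frac1i$, or substitute $u=e^{-x/m}$ in $\int_0^\infty(1-(1-e^{-x/m})^m)\,dx$ to reduce it to $m\int_0^1\frac{1-(1-u)^m}{u}\,du=m\sum_{i=1}^m\frac1i$.

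The only genuinely new ingredient is the domination $X\le T+T'$; the rest is the textbook analysis of coupon collecting. The step needing the most care is the verification that observing every color and then observing every color again really does force the nerve to be a full simplex --- concretely, that the single point $T$ lies in every empirical support --- but this is exactly the short argument given above. It is worth noting that the inequality is typically strict, since overlaps among the supports created during the first round may already complete the graph before a full second round finishes.
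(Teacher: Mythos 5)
Your proposal is correct and follows essentially the same route as the paper: dominate $X$ by the time to complete two consecutive coupon-collector rounds, note the two round lengths are identically distributed, and apply the classical formula for the expected collection time. You merely fill in two details the paper leaves implicit (the verification that the point $T$ lies in every empirical support, and the inclusion--exclusion derivation of $\mathbb E T$, which the paper instead cites), so no substantive difference.
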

\begin{proof}
The results follow directly from the expected waiting time of the classical coupon collector problem. Let $Z$ denote the waiting time until we have observed every label, i.e., $Z$ is the waiting time until we have completed a collection of coupons if each coupon is an i.i.d. random variable that takes value $i$ with probability $p_i$. It is known that $\mathbb E Z = 2 \int_0^\infty \big(1- \prod_{i=1}^m (1-e^{-p_ix}) \big) dx$, and in the uniform case where $p_i =  {{1}\over{m}}$ for all $i\in [m]$, $\mathbb E Z = m \sum_{i=1}^m \frac{1}{i}$  (see \cite{Coupon} for several detailed proofs). Now, note that $\mathcal N_n = \Delta_{m-1}$ if we complete a collection, then complete a second collection, disjoint from the first.  Let $Z_1$ denote the waiting time to complete the first collection, and let $Z_2$ be the additional waiting time to complete a second collection. Then $X \leq Z_1 + Z_2$ and $Z_1, Z_2$ are equal in distribution to $Z$, so $\mathbb E X \leq \mathbb E(Z_1 +Z_2) = 2\mathbb E Z$. 
\end{proof}

%%%%%%%%%%%%%%%%%%%%%%%%% NEW ENDING %%%%%%%%%%%%%%%%%%

\section{Conclusion.} \label{conclusiones}

In this paper we introduced  a novel random interval graph model. It is well-suited for applications involving the overlap patterns of chronological observations. There are a number of natural fascinating questions for the curious reader.
First, obviously the distribution of birds  varies in time as seasonal changes (or other factors such as predators or climate change) affect the species, thus the non-stationary case is better for applications. We ask ourselves, which of the results can be extended to the non-stationary case when the key assumptions made here are no longer valid?
%{\color{red}What can be extended to the non-stationary case when the key assumptions are not valid anymore?}

Second, Hanlon presented in \cite{hanlon82} a characterization of all interval graphs using a unique interval representation. He used this to enumerate all interval graphs. The analysis we presented in Theorem \ref{probtosimplex} indicates that, when we use our stochastic process to generate random intervals on the line, the probability of getting an interval graph other than the complete graph goes to 0 as the number of samples $n$ goes to infinity. A natural challenge is to understand the decay of probabilities for different classes of graphs, for instance, random \emph{interval trees} (see \cite{EckhoffIntervalGraph}). 
%These questions show that there is more to watch for.
 
Finally, the story we presented is about data samples indexed by a single parameter, say time. But what happens when geographical coordinates, temperature, humidity, or other parameters are considered to model the distribution of birds? Extending the model to higher-dimensions produces new challenges. For example, the random interval graphs are no longer sufficient to capture all the information. Instead, one needs to investigate random simplicial complexes (see \cite{Hogan_Tverberg2020}) because we lose the natural order for the points that we have in the line. This implies that an equivalent result to Lemma 1 is no longer possible. For instance, continuing with our birdwatcher's analogy, suppose that colored points in Figure \ref{dib} represent the geographic coordinates of three different types of birds that have been studied. If our birdwatcher is trying to determine the usual habitat and the territorial interactions between them he/she will face the problem that two very similar data sets will induce different simplicial complexes.  

\begin{figure}[h!]
\begin{center}
\includegraphics[scale=.18]{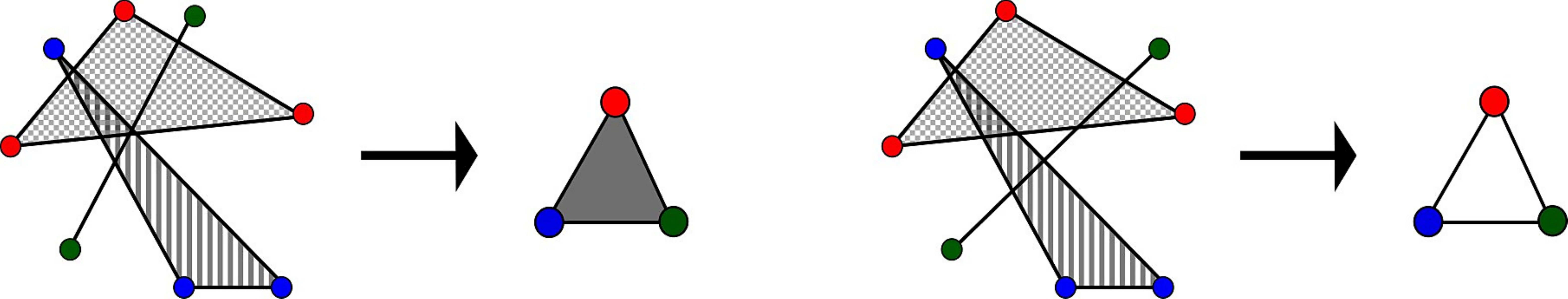}
\caption{Two data sets of $3$ different bird species with the same order type inducing different simplicial complexes.}
\label{dib}
\end{center}
\end{figure}

\noindent {\bf Acknowledgements.} The first and second authors     %Edgar and Jesus  
gratefully acknowledge partial support from NSF DMS-grant 1818969. The second author also acknowledges support from the NSF-AGEP supplement. Finally, the third and fourth authors %Deborah and Antonio 
gratefully acknowledge partial support from PAPIIT IG100721 and CONACyT 282280.  
\vskip 0.8cm
%\bibliographystyle{maa1.bst}
%\bibliography{references__1}

\def\cprime{$'$}

%%%%%%%%%%%%%%%%%%%%%%%%%%%%%%%%
%%%%%%%%%%%%%%%%%%%%%%%%%%%%%%%%

\textbf{J.~A. De Loera} is a professor of Mathematics at the University of California, Davis. His main mathematical themes are discrete geometry and combinatorial optimization. He enjoys walking with his dog Bolo and watching coyotes roam the fields near his house. 

Department of Mathematics, University of California, Davis\\
deloera@math.ucdavis.edu

\textbf{E. Jaramillo-Rodriguez} is a PhD candidate in Mathematics at the University of California, Davis. Edgar is writing their thesis on stochastic combinatorial geometry applied to data science and machine learning. Edgar likes spending time outdoors with good friends or good books.

Department of Mathematics, University of California, Davis\\
ejaramillo@ucdavis.edu

\textbf{D. Oliveros} is a professor at the Institute of Mathematics at the National Autonomous University of Mexico UNAM (Campus Juriquilla). 
Her areas of interest in mathematics are discrete and computational geometry and convexity. 
She enjoys dancing, gardening and playing with her dogs.

Instituto de Matem\'aticas, Universidad Nacional Aut\'onoma de M\'exico\\
doliveros@im.unam.mx

\textbf{A.~J. Torres} is a doctoral student in Mathematics at the National Autonomous University of Mexico UNAM. His main areas of interest include discrete geometry, data analysis, and combinatorics. He enjoys jogging around the city and hiking the trails near his hometown in Quer\'etaro.

Instituto de Matem\'aticas, Universidad Nacional Aut\'onoma de M\'exico\\
antonio.torres@im.unam.mx

\vfill\eject

%%%%%%%%%%%%%%%%%%%%%%%%%%%%%
%%%%%%%%%%%%%%%%%%%%%%%%%%%%%%%%%%

%\newpage
\section{Appendix.}

% EDITADO: APPENDIX QUE QUITAMOS
%\subsection{On uncountable collections of independent variables} In Section \ref{intro} we referenced the fact that one of our assumptions for the special case comes with some peril. Namely, we assumed that for any pair $s\neq t$ in our indexing set $I$, the variables $Y_s$ and $Y_t$ are independent. The issue with this statement is that such a collection of random variables cannot be constructed on the standard probability space: $\Omega = [0,1]$, $\mathcal F = \mathcal B([0,1])$, $P = \mu_{leb}$ (except in the trivial case where they are all constant). One proof of this fact is as follows: suppose such a collection did exist and denote it $\{ X_i : i\in I\}$. Note that each $X_i$ is bounded, as we assumed they take values in $[m]$, and so $X_i \in L^2(\Omega, P)$. Thus the vectors $Z_i :=  (X_i - \mathbb EX_i)$ form an uncountable set of independent, and hence orthogonal variables in $ L^2(\Omega, P)$. However this contradicts the fact that $ L^2(\Omega, P)$ is separable, because any separable Hilbert space admits at most a countable orthonormal basis. 

%Luckily, this challenge does not affect our analysis for the following reasons. Firstly, one can show that such a family of variables can be constructed on a non-standard probability space (this is a consequence of the Kolmogorov extension theorem, see \cite{durrett_2019}). Secondly, in our analysis we only ever use \emph{finitely many} of these random variables so the construction is in fact unnecessary for understanding this manuscript. 

\subsection{Experimental Results.}
In Theorems \ref{thmmaxdegree}, \ref{expectedcliquenum}, and \ref{probtosimplex} we provided lower bounds on the likelihood of various events occurring given $n$ points and $m$ labels. To study the usefulness of these bounds we ran simulations. For each pair $(m,n)$ we randomly colored $n$ points on the real line using $m$ colors with uniform probability (each color was equally likely) then constructed the induced interval graph. We repeated this process 100 times for each pair $(m,n)$ and plotted the percentage of the simulations where the desired event occurred. We also plotted our lower bounds from the theorems above and found that, in general, our bounds perform well for most values of $m$ and $n$. 

Figure \ref{fig:Delta m-1} compares the bound on the maximum degree obtained in Theorem \ref{thmmaxdegree} and the empirical approximation generated by our simulations. Figure \ref{fig:clique_number} compares the bound on the expected clique number obtained in Theorem \ref{expectedcliquenum} and the empirical approximation generated by our simulations. Figure \ref{fig:complete_bounds} compares the bound on the probability of the nerve being the $(m-1)$ simplex obtained in Theorem \ref{probtosimplex} and the empirical approximation generated by our simulations. 

Finally, we also compared the probability of obtaining $m-1$ as a maximum degree $\Deg(\mathcal{N}_n)$ in the Scheinerman model with our model. In \cite{JusticzScheinermanWinkler}, the authors prove in a clever way, that this probability is exactly $2/3$ and their result does not depend on the number of intervals. On the other hand, in our model this probability depends on both, the number of points and the number of colors that we use.
We generated $1000$ random $m-$colorings, for Scheinerman's model (the solid line) $1\leq m \leq 50$. For our model we use several values of $n$ with $1\leq m \leq n$. The results are displayed in Figure \ref{fig:degree}.

\begin{figure}[hbt]
%\subfigure{\includegraphics[width=2.7cm, height=3.5cm]{5col_m_optimo.png}} \hskip -.7cm
\subfigure{\includegraphics[width=3cm, height=3cm]{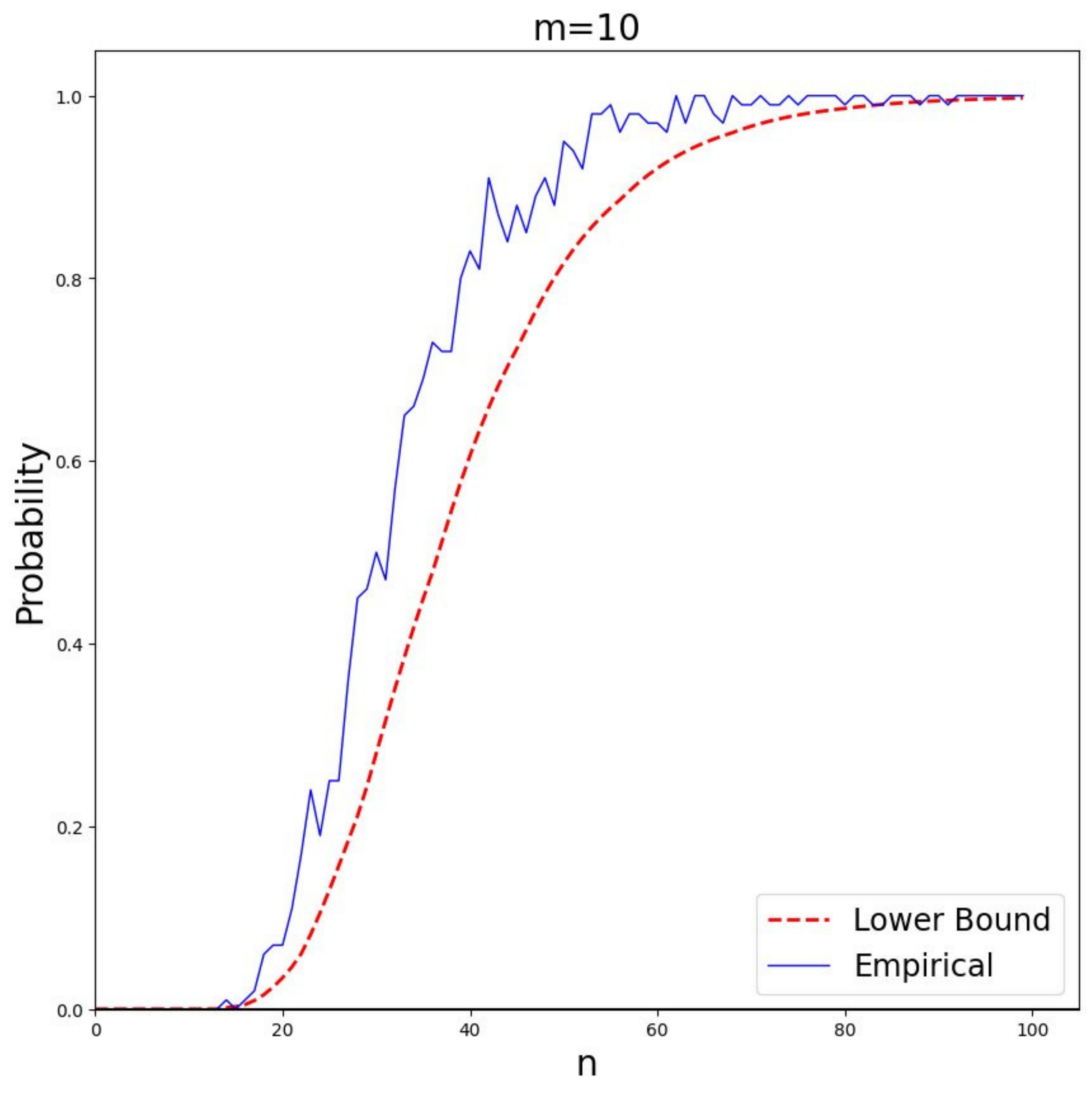}}
\subfigure{\includegraphics[width=3cm, height=3cm]{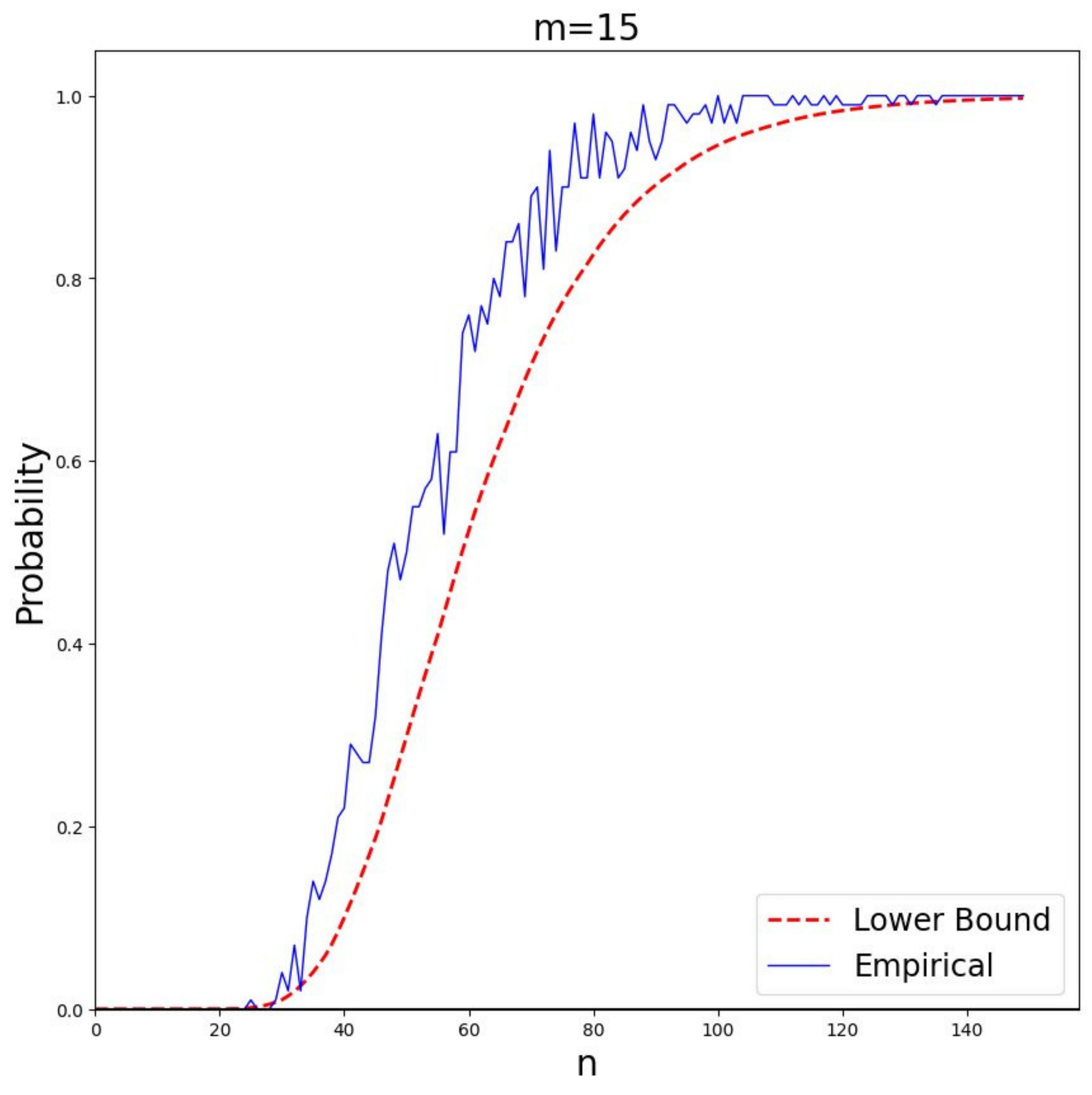}}
\subfigure{\includegraphics[width=3cm, height=3cm]{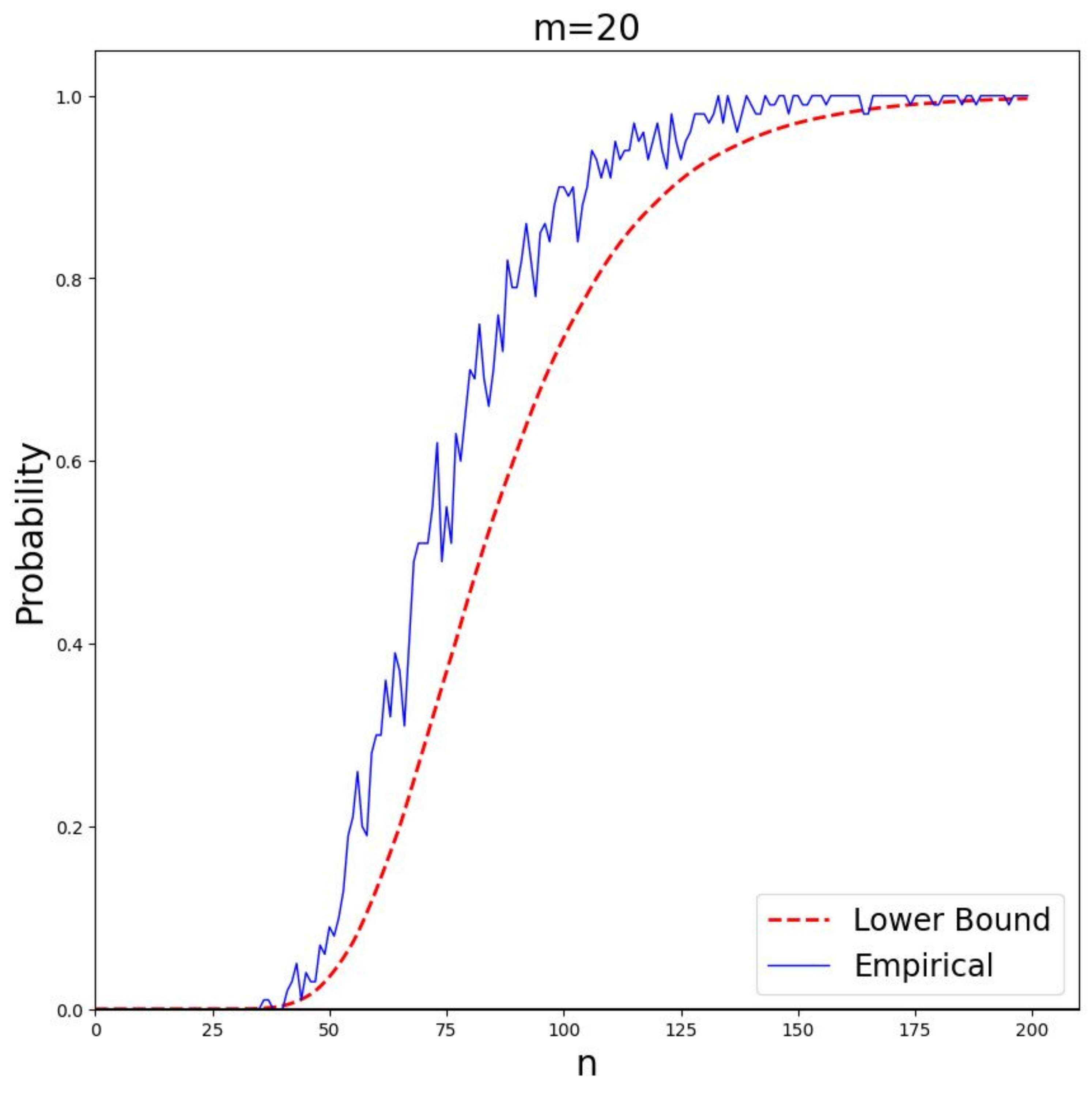}}
\subfigure{\includegraphics[width=3cm, height=3cm]{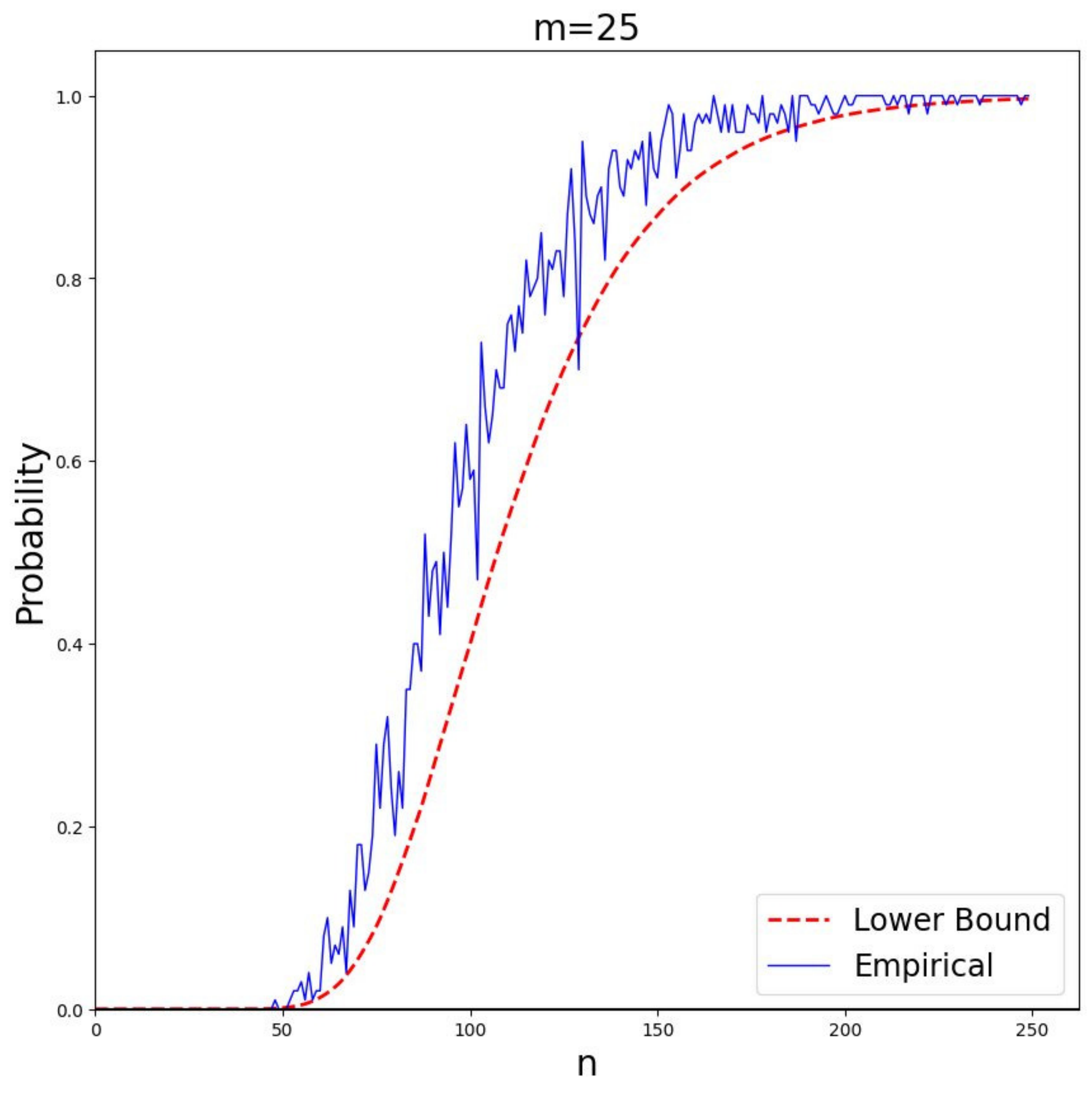}} \\
\subfigure{\includegraphics[width=3cm, height=3cm]{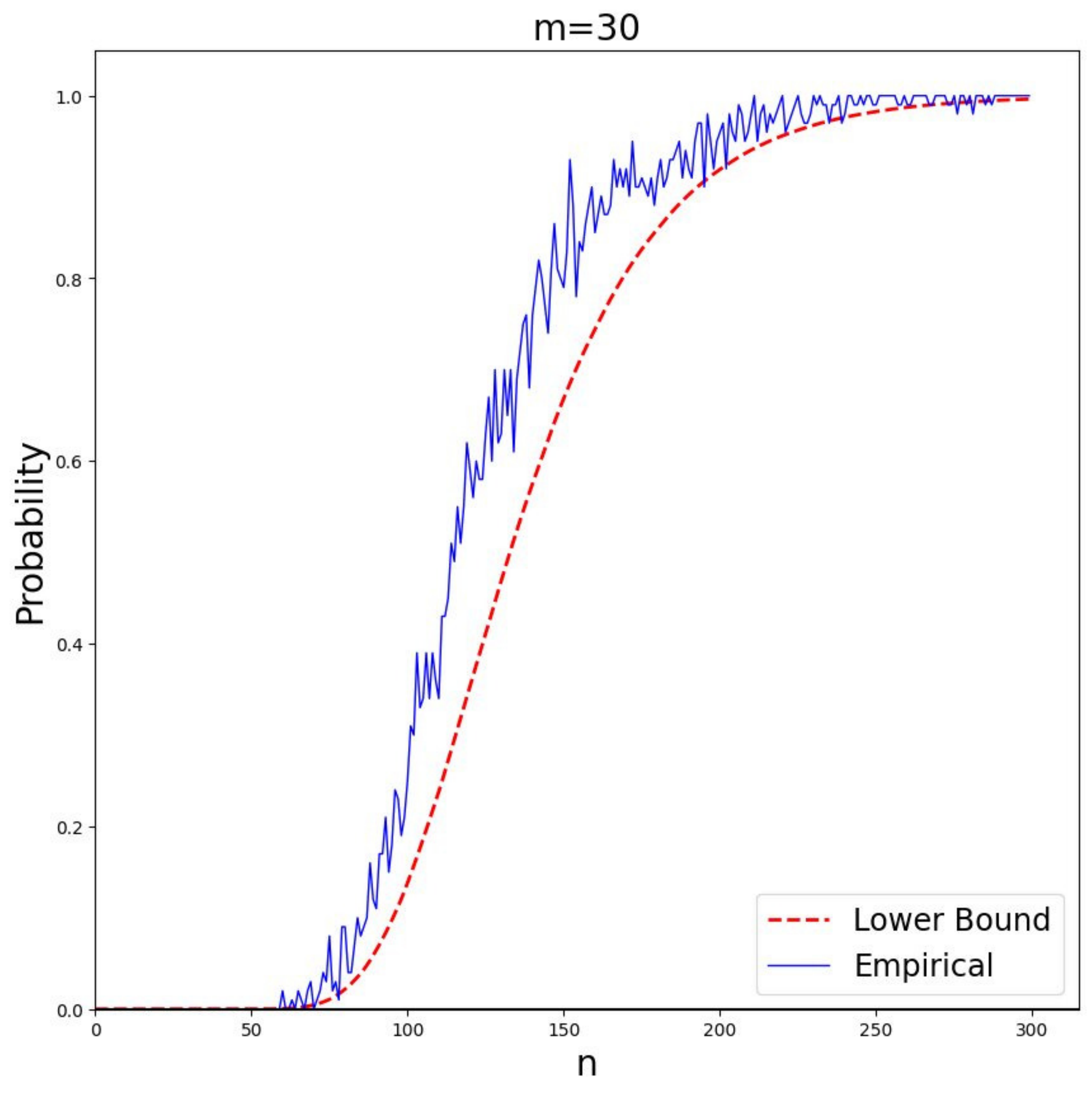}} 
\subfigure{\includegraphics[width=3cm, height=3cm]{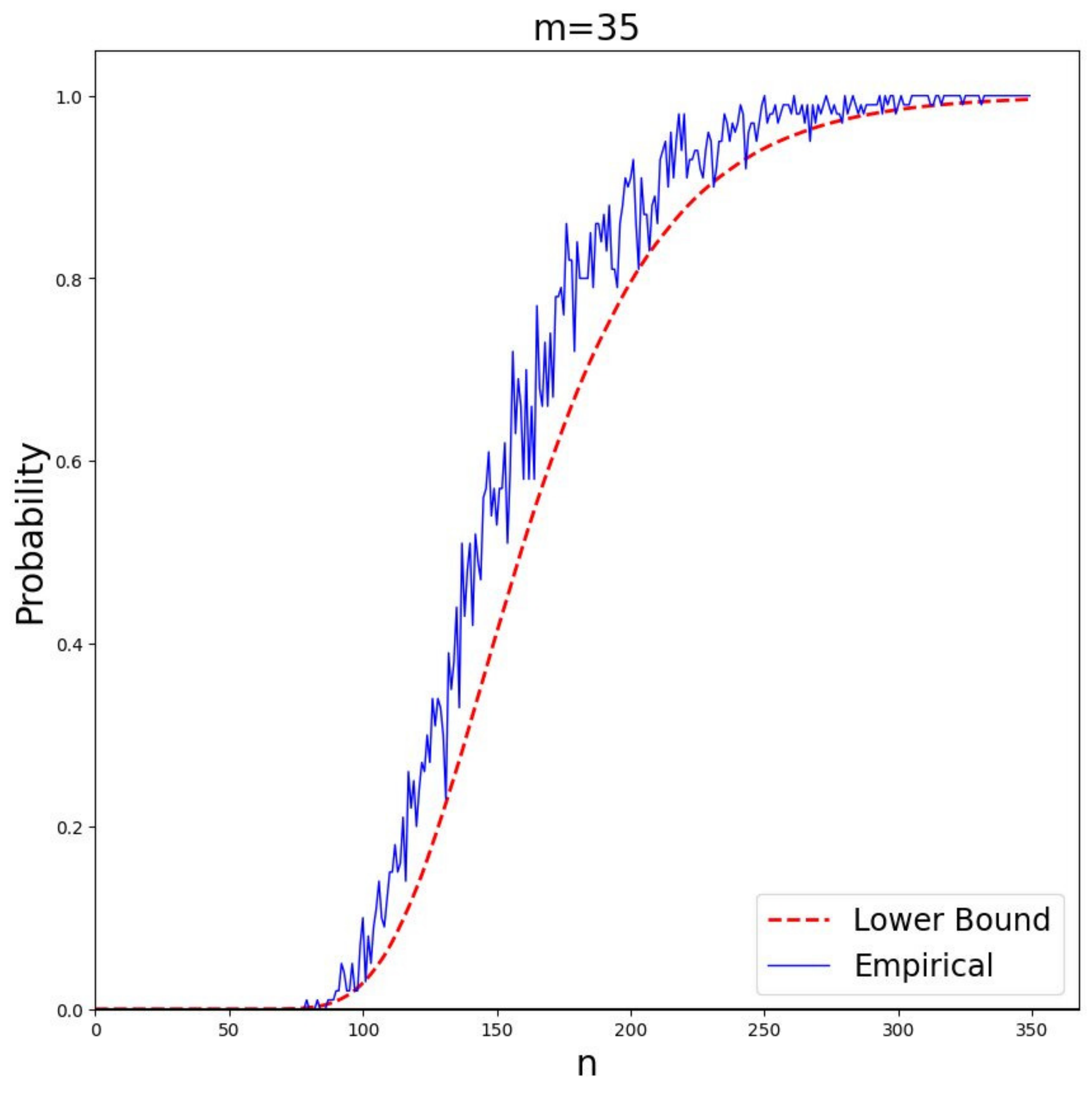}} 
\subfigure{\includegraphics[width=3cm, height=3cm]{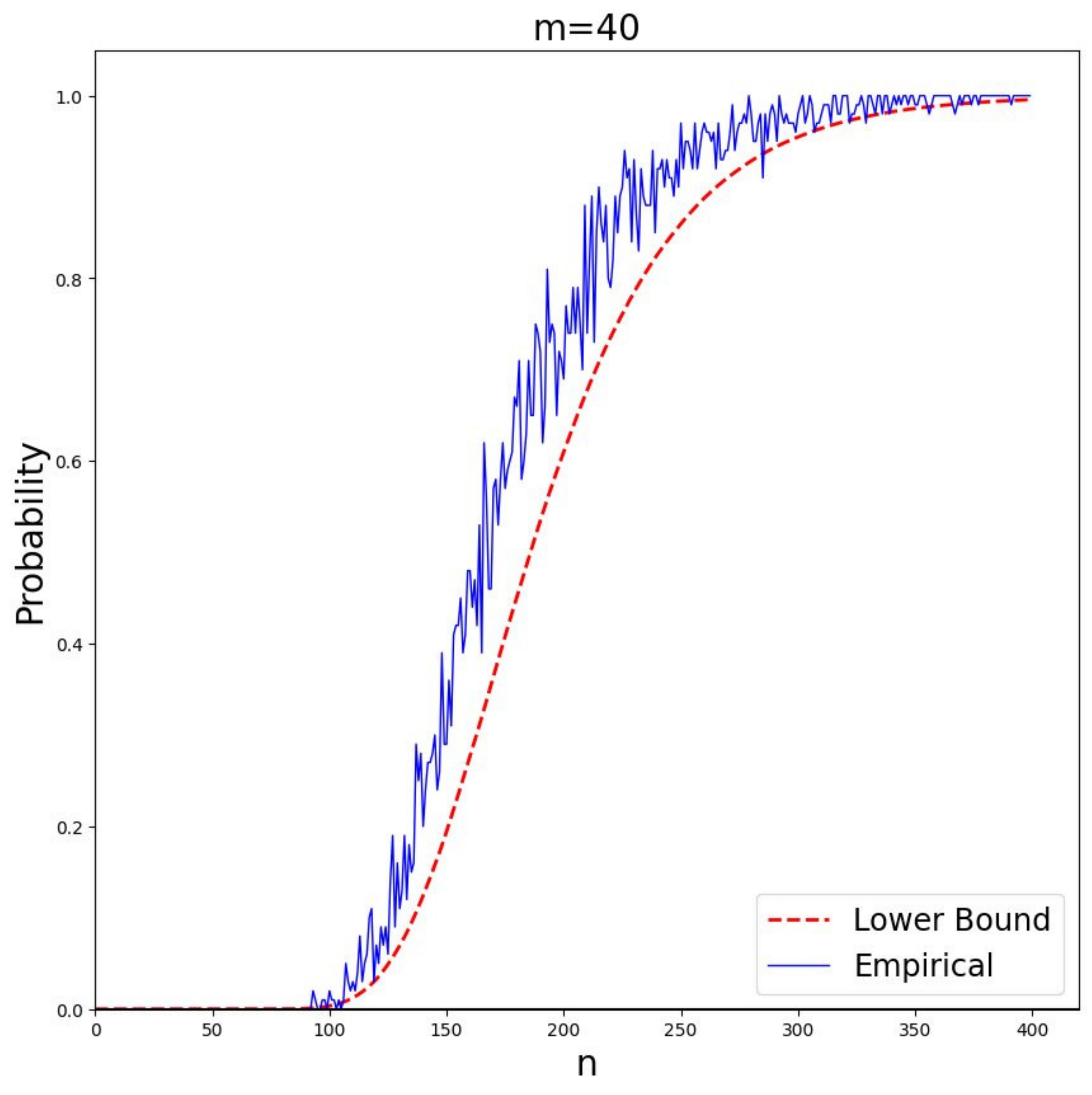}} 
\subfigure{\includegraphics[width=3cm, height=3cm]{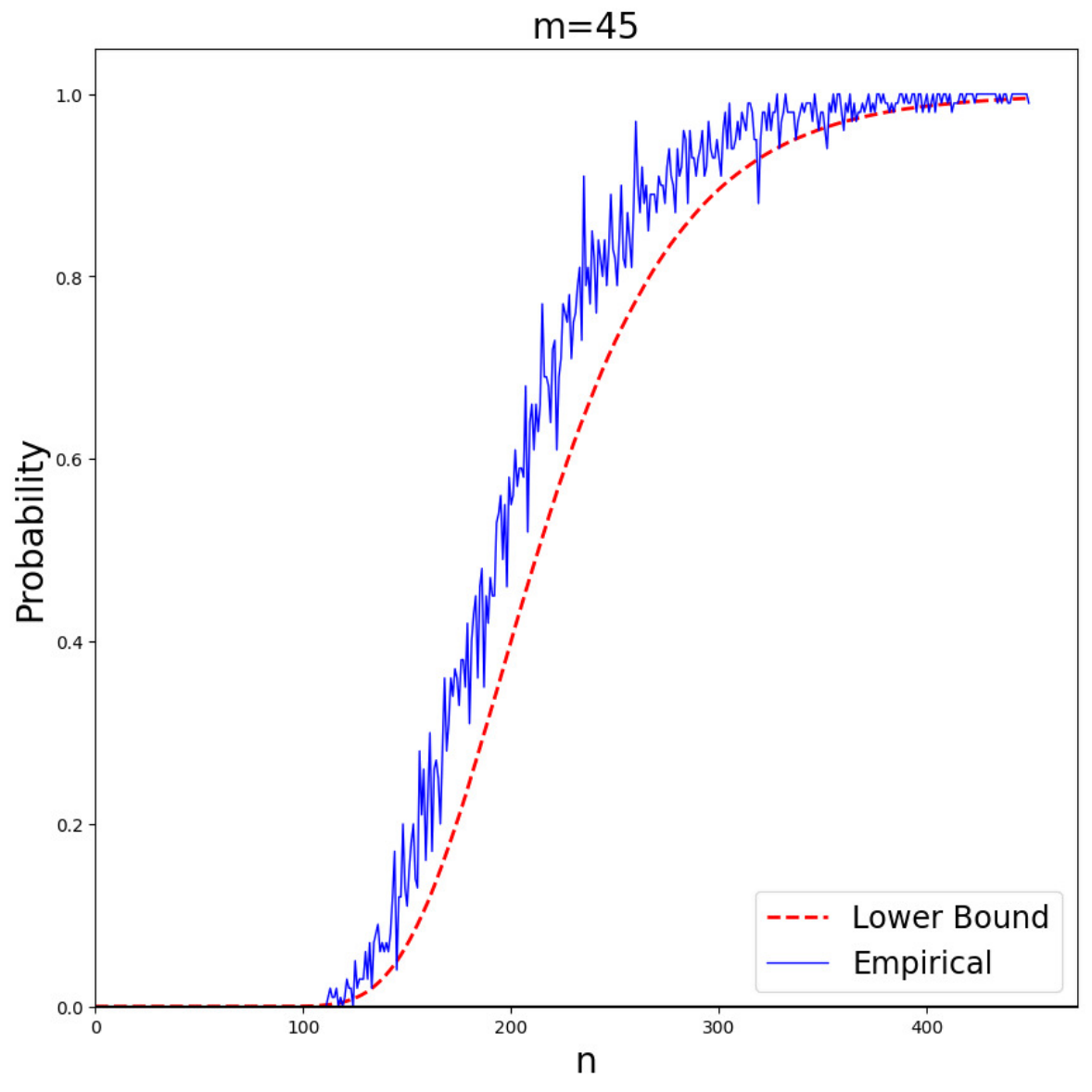}} 
%\subfigure{\includegraphics[width=2.7cm, height=3.5cm]{50col_m_optimo.png}}
\caption{Probability of $\Deg(\mathcal{N}_n) = m-1$, simulations compared to bound from Theorem \ref{thmmaxdegree}.}
\label{fig:Delta m-1}
\end{figure}

\begin{figure}[hbt]
\includegraphics[width=\textwidth]{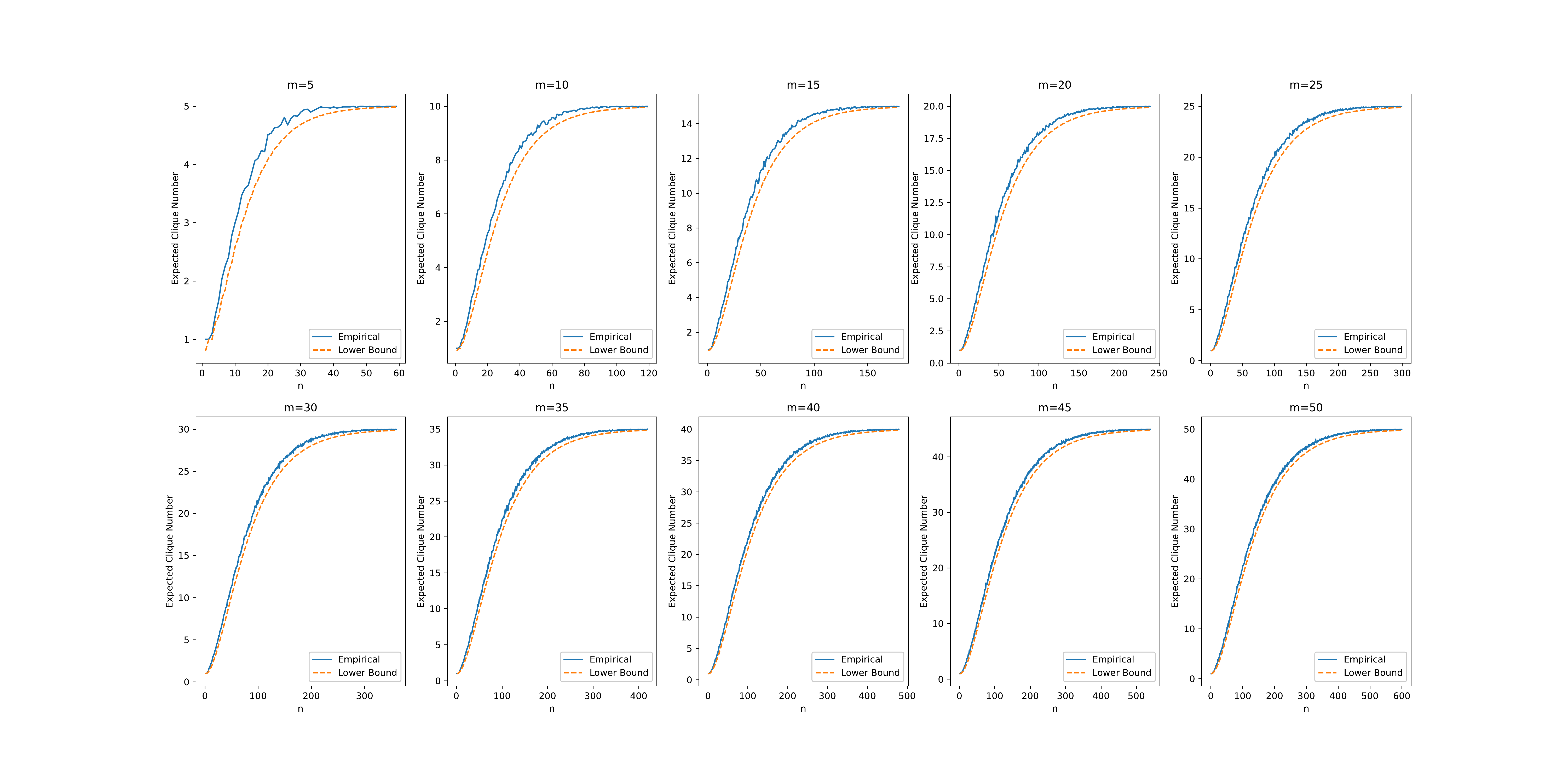}
\caption{Expected clique number of $\mathcal{N}_n$ with uniform coloring as a function of $n$.}
\label{fig:clique_number}
\end{figure}

\begin{figure}[hbt]
\centering
\includegraphics[width=0.95\textwidth]{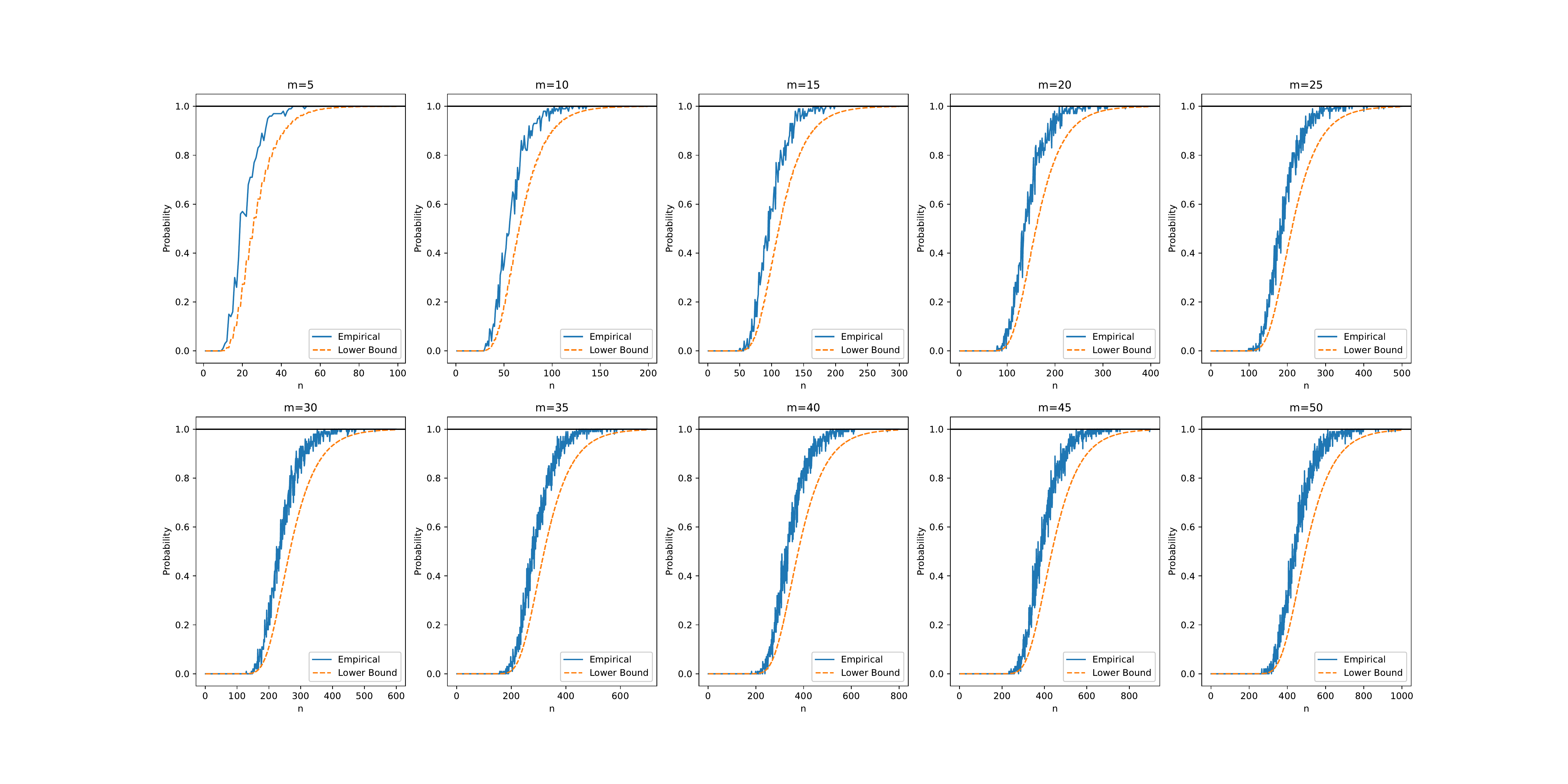}
\caption{Probability that  $\mathcal{N}_n = \Delta_{m-1}$.}
\label{fig:complete_bounds}
\end{figure}

\begin{figure}[h]
\includegraphics[scale=.38]{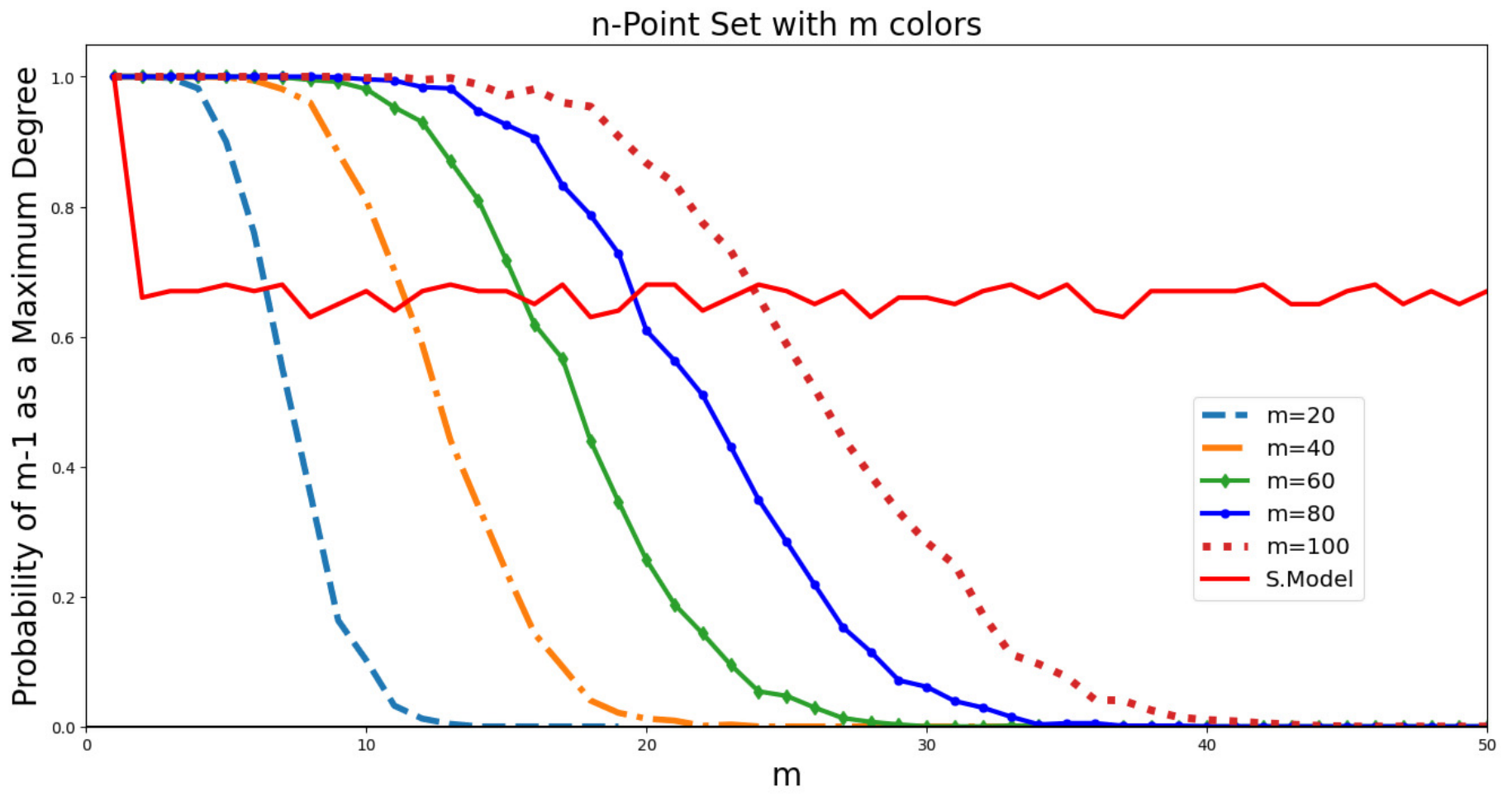}
\centering
\caption{Comparison between Scheinerman's model and ours with the probability that $\Deg(\mathcal{N}_n) = m-1$.}
\label{fig:degree}
\end{figure} 

\end{document}